\newcommand{\ab}{\mathrm{ab}}
\newcommand{\id}{\mathrm{id}}
\newcommand{\bA}{\mathbf{A}} 
\newcommand{\bF}{\mathbf{F}}
\newcommand{\bP}{\mathbf{P}}
\newcommand{\bZ}{\mathbf{Z}}
\newcommand{\inj}{\hookrightarrow}
\DeclareMathOperator{\rH}{H}
\newcommand{\dO}{{\mathcal O}}
\newcommand{\et}{\text{\rm \'et}} 
\newcommand{\ep}{\varepsilon}
\newcommand{\perf}{{\rm perf}}
\newcommand{\pr}{{\rm pr}}
\newcommand{\sh}{{\rm sh}}
\newcommand{\ph}{\varphi}
\DeclareMathOperator{\Gal}{Gal}
\DeclareMathOperator{\Spec}{Spec}
\DeclareMathOperator{\Hom}{Hom}
\theoremstyle{plain}
  \newtheorem{thm}{Theorem}[section]
  \newtheorem{lemma}[thm]{Lemma}
  \newtheorem{prop}[thm]{Proposition}
\theoremstyle{definition}
  \newtheorem{defi}[thm]{Definition}
  \newtheorem{example}[thm]{Example}
 \newtheorem{notation}[thm]{Notation}
  \newtheorem{rmk}[thm]{Remark}
\newtheorem{thmABC}{Theorem}
 \newtheorem{propABC}[thmABC]{Proposition}
\numberwithin{equation}{section}
\newcommand{\tref}[1]{Theorem~\ref{#1}}
\newcommand{\secref}[1]{\S\ref{#1}}
\newcommand{\cref}[1]{Corollary~\ref{#1}}
\newcommand{\eref}[1]{Example~\ref{#1}}
\newcommand{\lref}[1]{Lemma~\ref{#1}}
\newcommand{\pref}[1]{Proposition~\ref{#1}}
\author{Piotr Achinger}
\address{Piotr Achinger, Instytut Matematyczny PAN, Śniadeckich 8, 00-656 Warsaw, Poland}
\email{pachinger@impan.pl}
\author{Jakob Stix}
\address{Jakob Stix, Institut f\"ur Mathematik, Goethe–Universit\"at Frankfurt, Robert-Mayer-Stra\ss e 6--8, 60325 Frankfurt am Main, Germany}
\email{stix@math.uni-frankfurt.de}
\title[Topological rigidity of maps in positive characteristic]{Topological rigidity of maps in positive characteristic\\ and anabelian geometry}
\thanks{The first author (PA) was supported by the project KAPIBARA funded by the European Research Council (ERC) under the European Union's Horizon 2020 research and innovation programme (grant agreement No 802787). The second author (JS) acknowledges support by Deutsche  Forschungsgemeinschaft  (DFG) through the Collaborative Research Centre TRR 326 "Geometry and Arithmetic of Uniformized Structures", project number 444845124. Moreover, the second author thanks IMPAN for the stimulating working conditions during a visit in 2017 which started the project of this paper.}
\date{\today}
\begin{document}

\begin{abstract}
We study pairs of non-constant maps between two integral schemes of finite type over two (possibly different) fields of positive characteristic. When the target is quasi-affine, Tamagawa showed 
that the two maps are equal up to a power of Frobenius if and only if they induce the same homomorphism on their \'etale fundamental groups. We extend Tamagawa's result by adding a purely topological criterion for maps to agree up to a power of Frobenius.
\end{abstract}

\maketitle

\DeclareRobustCommand{\SkipTocEntry}[5]{}
\setcounter{tocdepth}{1} 
{\scriptsize \tableofcontents}

\section{Introduction}
\label{sec:intro}
Anabelian geometry aims to describe geometry of schemes in terms of their \'etale fundamental groups, or more generally in terms of their \'etale homotopy types. 
Unlike in characteristic $0$, where the homotopy type of maps is locally constant in families, in characteristic $p>0$ finite \'etale covers can have continuous moduli and thus potentially determine geometry that moves in families. The most famous example here is the family of finite \'etale Artin--Schreier covers $x \mapsto \wp_t(x) = x^p - tx$ of $\wp_t \colon \bA_k^1 \to \bA_k^1$ with parameter $t$. The maps $\wp_{t,\ast} \colon \pi_1^\et(\bA_k^1,\bar 0) \to \pi_1^\et(\bA_k^1, \bar 0)$ vary with $t \in k^\times$.

Tamagawa \cite{Tamagawa2002:Pi1ofCurevsPositiveCharacteristic}*{Proposition~1.24} showed a remarkable 
\'etale rigidity property for morphisms between integral varieties over a field. 
If the target is quasi-affine then the induced map on \'etale fundamental groups determines non-constant maps up to a power of Frobenius. The \'etale rigidity of maps fits well with the result by the first author who showed in \cite{Achinger} that  affine schemes of finite type  are \'etale $K(\pi,1)$ spaces in charactersitic $p>0$. 

In this note, we show a remarkably general topological rigidity property for morphisms between integral varieties
over a field.  Non-constant maps are determined up to a power of Frobenius (the identity if the varieties are of characteristic $0$) by the map they induce on the underlying topological spaces.

\smallskip

We once and for all  fix a prime $p$. Furthermore, we denote by $|X|$ the underlying set of a scheme $X$. 
The \'etale fundamental group of a connected scheme $X$ is denoted by $\pi_1(X)$ with base points omitted. The maximal abelian quotient is $\pi_1^\ab(X)$.

The goal of this note is to prove the following result.  

\begin{thmABC}
\label{thm:main}
  Let $X$ and $Y$ be connected quasi-compact and quasi-separated $\bF_p$-schemes. Let $f,g \colon Y \to X$ be two morphisms, and consider the following properties:
  \begin{enumerate}[label=(\alph*)]
    \item 
    \label{thmitem:pi1}
    The morphisms $f_*, g_*\colon \pi_1(Y)\to \pi_1(X)$ are equal up to conjugation.
  \end{enumerate}
  \begin{enumerate}[label=(\alph*\ensuremath{'})]
    \item
    \label{thmitem:pi1ab}
    The morphisms $f_*, g_*\colon \pi^\ab_1(Y)\to \pi^\ab_1(X)$ are equal.
  \end{enumerate}
  \begin{enumerate}[label=(\alph*\ensuremath{''})]
    \item 
    \label{thmitem:H1}
    The morphisms $f^*, g^*\colon \rH^1_{\et}(X, \bF_p)\to \rH^1_\et(Y, \bF_p)$ are equal.
  \end{enumerate}
  \begin{enumerate}[label=(\alph*),resume]
    \item    
    \label{thmitem:top}
    The maps $f$ and $g$ induce the same map of sets $|Y|\to |X|$.
    \item    
    \label{thmitem:Frob}
    There exist integers $a,b\geq 0$ such that $F_X^a\circ f = F_X^b\circ g$, where $F_X\colon X\to X$ is the absolute Frobenius.
  \end{enumerate}
  Then the following implications always hold: 
  \[
  \ref{thmitem:Frob} \Longrightarrow \ref{thmitem:pi1}  \Longrightarrow \ref{thmitem:pi1ab}  \Longrightarrow \ref{thmitem:H1}  \qquad \text{and} \qquad   \ref{thmitem:Frob}  \Longrightarrow \ref{thmitem:top}.
  \]
  Moreover:
  \begin{enumerate}
  \item  \label{thmitem:1}
  If  $Y$ is integral of finite type over a field and $X$ is integral of finite type and separated over a second field, and $f(Y)$ and $g(Y)$ have positive dimension, then \ref{thmitem:top} is equivalent to \ref{thmitem:Frob}.
  \item
  If, moreover in addition to the assumptions in \eqref{thmitem:1}, $X$ is quasi-affine, then all five properties are equivalent. 
  \end{enumerate}
\end{thmABC}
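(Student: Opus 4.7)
The plan proceeds in three stages, mirroring the structure of the statement. First, the always-valid implications. $(c) \Rightarrow (a)$ follows from the classical fact that the absolute Frobenius $F_X$ acts as the identity on the \'etale site of $X$, so that $F_X^a \circ f$ induces the same map on $\pi_1$ (up to conjugation) as $f$. Passing to abelianizations yields $(a) \Rightarrow (a')$, and the identification $\rH^1_\et(-,\bF_p) \cong \Hom_{\mathrm{cts}}(\pi_1^\ab(-),\bF_p)$ gives $(a') \Rightarrow (a'')$. Finally $(c) \Rightarrow (b)$ is immediate since $F_X$ fixes $|X|$ pointwise.

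The substantive content is Part (1), namely $(b) \Rightarrow (c)$. Let $Z \subseteq X$ be the reduced closure of the common image $f(Y) = g(Y)$; by hypothesis $Z$ is integral of positive dimension, and both $f, g$ factor through $Z$ as dominant morphisms. Since $F_X|_Z = F_Z$, it suffices to produce integers $a, b \geq 0$ with $F_Z^a \circ f = F_Z^b \circ g$. At the generic point, $f$ and $g$ correspond to two embeddings $\varphi, \psi \colon K(Z) \hookrightarrow K(Y)$, and the hypothesis $|f|=|g|$ at every codimension-one point of $Y$ implies that for each DVR $R \subseteq K(Y)$ arising this way, the contractions $\varphi^{-1}(R)$ and $\psi^{-1}(R)$ have the same center on $Z$. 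The main obstacle is then a field-theoretic rigidity statement, which I would prove via a valuative argument on a smooth projective model of $K(Z)$: two embeddings of a finitely generated extension $L/\bF_p$ of positive transcendence degree into a field $K$, inducing the same set-theoretic map $|\Spec K| \to |\Spec L|$, must differ by composition with a power of the Frobenius of $L$. Granted this, $F_Z^a \circ f = F_Z^b \circ g$ holds at the generic point, hence on all of $Y$ by reducedness and separatedness.

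For Part (2), assume additionally that $X$ is quasi-affine. Tamagawa's theorem \cite{Tamagawa2002:Pi1ofCurevsPositiveCharacteristic} supplies $(a) \Rightarrow (c)$ in this setting, and combined with Part (1) this yields $(a) \Rightarrow (c) \Leftrightarrow (b)$. To close the loop it remains to bootstrap $(a'')$ back to one of the stronger conditions. The key extra ingredient is the first author's theorem \cite{Achinger} that quasi-affine schemes of finite type over an $\bF_p$-field are \'etale $K(\pi,1)$ for $\bF_p$-coefficients, which makes the pro-$p$ quotient of $\pi_1^\ab(X)$ accessible from $\rH^1_\et(X,\bF_p)$; a separate treatment of the prime-to-$p$ (tame) part of $\pi_1(X)$, controlled by a compactification and compatible with the Frobenius rigidity already obtained, upgrades $(a'')$ to $(a)$ and closes the chain of equivalences.
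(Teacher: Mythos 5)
Your outline of the formal implications and the overall architecture (reduce \ref{thmitem:top} $\Rightarrow$ \ref{thmitem:Frob} to a rigidity statement at the generic point, then close the cycle of equivalences in the quasi-affine case) matches the paper, but both substantive steps contain genuine gaps. For Part (1), the ``field-theoretic rigidity statement'' you reduce to is not correctly formulated: any two embeddings of $L$ into $K$ induce the same map $|\Spec K|\to|\Spec L|$ of one-point sets, so the hypothesis as written is vacuous. What is actually needed is that the two embeddings $K(Z)\hookrightarrow K(Y)$ have the same specialization behaviour at \emph{all} points of $Y$, not only at codimension-one points, and proving the resulting rigidity is where essentially all the work of the paper lies. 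The paper's route is: reduce to coordinate functions so that $X=\bA^1_k$; reduce to $Y$ a smooth affine curve, which requires first base-changing $Y$ to an \emph{uncountable} algebraically closed field so that a countable union of equalizers $Z_{a,b}$ covering all closed points forces one of them to be dense (\lref{lem:countable union of constructible}); and then a genuine dichotomy on the constant field $k$. For $k$ infinite one identifies the \'etale equivalence relations $R(f')=R(g')$ on $Y$ and pins down the induced automorphism of $\bP^1_K$ using multiplicativity on $k$-rational values; for $k$ finite one needs an intersection-theoretic estimate comparing $\#S_d\geq \deg(f)\,q^d-B$ with $\sum_{m\in M_d}\deg\overline Z_m=O(d\,q^{d/2})$ against the graphs of powers of Frobenius in $\bP^1_K\times\bP^1_K$. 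None of this is subsumed by ``a valuative argument on a smooth projective model''; in particular the finite-constant-field case cannot be detected at the level of a single DVR, and your plan supplies no argument for it.

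For Part (2), the proposed bootstrap of \ref{thmitem:H1} back to \ref{thmitem:pi1} via the $K(\pi,1)$ property and a tame/prime-to-$p$ analysis does not work: equality of $f^*,g^*$ on $\rH^1_\et(X,\bF_p)$ only controls the maximal elementary abelian $p$-quotient of $\pi_1$, and no $K(\pi,1)$ statement recovers the map on the full fundamental group from this. The paper instead proves \ref{thmitem:H1} $\Rightarrow$ \ref{thmitem:top} directly: after reducing to a smooth affine curve $Y$ mapping to $\bA^1_{\bF_p}$, the hypothesis yields $f^n-g^n=h_n-h_n^p$ for all $n\geq 1$ (pullbacks of the Artin--Schreier torsors $x^p-x=t^n$), and a valuation estimate on differentials in $K(\!(t)\!)$ at a pole of $f$ forces $f^{p^a}=g^{p^b}$. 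This Artin--Schreier mechanism (essentially Tamagawa's) is the ingredient your plan is missing; with it the cycle \ref{thmitem:Frob} $\Rightarrow$ \ref{thmitem:pi1} $\Rightarrow$ \ref{thmitem:pi1ab} $\Rightarrow$ \ref{thmitem:H1} $\Rightarrow$ \ref{thmitem:top} $\Rightarrow$ \ref{thmitem:Frob} closes with no separate treatment of the prime-to-$p$ part required.
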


\begin{rmk} 
Tamagawa in \cite{Tamagawa2002:Pi1ofCurevsPositiveCharacteristic}*{Proposition~1.24} shows that  \ref{thmitem:pi1} $\iff$ \ref{thmitem:Frob} if $Y$ is irreducible of finite type over a field, and $X$ is quasi-affine and connected. The proof of the nontrivial direction  \ref{thmitem:pi1}    $\Rightarrow$ \ref{thmitem:Frob} 
 essentially uses the equality of the pullback maps 
 \[
 f^*, g^*\colon \rH^1_{\et}(X, \bF_q)\to \rH^1_\et(Y, \bF_q)
 \]
for all $p$-powers $q$. This being a consequence of  \ref{thmitem:H1}, Tamagawa's proof in fact yields the implication 
 \ref{thmitem:H1}  $\Rightarrow$   \ref{thmitem:Frob}.

Although it turns out that our proof in \secref{sec:artin-schreier}  of  \ref{thmitem:H1} $\Rightarrow$ \ref{thmitem:top}  is similar to Tamagawa's proof of \ref{thmitem:pi1} $\Rightarrow$  \ref{thmitem:Frob}, we nevertheless decided to include our argument for completeness sake.
\end{rmk}

We start with the immediate  implications:  $\ref{thmitem:pi1}$ $\Rightarrow$ $\ref{thmitem:pi1ab}$ is trivial and \ref{thmitem:pi1ab} $\Rightarrow$ \ref{thmitem:H1}  follows from the functorial isomorphism 
\[
  \Hom(\pi^\ab_1(X), \bF_p) \simeq \rH^1_{\et}(X, \bF_p).
\]
The fact that $F_X$ induces the identity on $\pi_1(X)$ and $|F_X|$ is the identity on $|X|$ proves the 
implication \ref{thmitem:Frob}  $\Rightarrow$ \ref{thmitem:pi1} and  \ref{thmitem:top}. 

\medskip

The remaining implication  \ref{thmitem:top} $\Rightarrow$ \ref{thmitem:Frob} that completes the proof of Theorem A is a special case of the following slightly more general proposition proved in \secref{sec:topol}. Here by a variety we mean a separated scheme of finite type over a field.

\begin{propABC}[see \pref{prop:top-equal}] \label{propABC:top-equal}
  Let $Y$ be an integral scheme of finite type over a field and let $X$ be an integral variety over  a field. Let $f, g \colon Y \to X$ be two maps inducing the same map $|Y| \to |X|$, with image of positive dimension. 
  Then, there exist $a,b\geq 0$ such that 
  \[
    F_X^a\circ f = F_X^b \circ g.
  \]
\end{propABC}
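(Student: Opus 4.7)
The plan is to reduce the statement to a ring-theoretic problem about two injections $\phi, \psi \colon A \hookrightarrow B$ of integral domains of finite type over fields of characteristic $p$, analyze the situation pointwise at $\bar{\mathbf{F}}_p$-valued points to extract a Frobenius twist, and then upgrade the pointwise data into a uniform global exponent. I expect the last step to be the main obstacle.

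\textbf{Reductions.} Both $f$ and $g$ send $\eta_Y$ to the same $\xi \in X$. Let $Z := \overline{\{\xi\}}$ with its reduced structure; by hypothesis $\dim Z > 0$, and both maps factor through $Z \hookrightarrow X$. Since $F_X \circ \iota_Z = \iota_Z \circ F_Z$, I may replace $X$ with $Z$, assuming $X$ integral and $f, g$ dominant. Choose an affine open $U = \Spec A \subseteq X$ containing the generic point of $X$, and an affine open $V = \Spec B \subseteq f^{-1}(U) \cap g^{-1}(U)$, which is dense in $Y$ by dominance. Since $Y$ is reduced and $X$ separated, the equalizer in $Y$ of $F_X^a \circ f$ and $F_X^b \circ g$ is a closed subscheme, so it suffices to verify the identity on the dense $V$. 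The problem becomes: given $\phi, \psi \colon A \hookrightarrow B$ injective with $\phi^{-1}\mathfrak{p} = \psi^{-1}\mathfrak{p}$ for every $\mathfrak{p} \in \Spec B$, produce $a, b \geq 0$ with $\phi(\alpha)^{p^a} = \psi(\alpha)^{p^b}$ in $B$ for all $\alpha \in A$.

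\textbf{Pointwise analysis.} For each $\mathbf{F}_p$-algebra map $y \colon B \to \bar{\mathbf{F}}_p$, the kernel is a maximal ideal of $B$ (by the Jacobson property), and the composites $y\phi, y\psi \colon A \to \bar{\mathbf{F}}_p$ share the common kernel $\phi^{-1}\ker y = \psi^{-1}\ker y$. They thus factor as two $\mathbf{F}_p$-embeddings of the same finite field quotient of $A$ into $\bar{\mathbf{F}}_p$. Two such embeddings differ by an element of $\Gal(\bar{\mathbf{F}}_p / \mathbf{F}_p) = \hat{\mathbf{Z}}$, which acts on the finite image as a power of absolute Frobenius. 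Hence for each closed geometric point $y$ there is an integer $n_y$ (well-defined only modulo the residue-field degree) with $y\phi(\alpha) = y\psi(\alpha)^{p^{n_y}}$ for every $\alpha \in A$.

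\textbf{Uniformity — the main obstacle.} For $n \in \mathbf{Z}$ let $W_n \subseteq \Spec B$ be the closed subscheme cut out by the finitely many equations $\phi(\alpha_i) - \psi(\alpha_i)^{p^n}$ for a generating set $\{\alpha_i\}$ of $A$ (with the roles of $\phi, \psi$ swapped for $n < 0$); the desired global identity amounts to $W_n = \Spec B$ for some $n$, whence integrality of $B$ forces the generating equations, and hence the full identity. The pointwise analysis gives only that $\bigcup_{n \in \mathbf{Z}} W_n$ contains every closed point of $\Spec B$. The heart of the proof is to show that some single $W_n$ is already dense — equivalently, contains the generic point of $\Spec B$ — a statement that would fail for an arbitrary countable family of closed subsets but must succeed here thanks to the positive-dimension hypothesis, which yields infinitely many closed points of arbitrarily large residue-field degree and thereby forces the residual ambiguity in $n_y$ to stabilize to a common integer. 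I expect the cleanest implementation to pass through valuation-theoretic data at codimension-one primes $\mathfrak{p}$ of $B$: the two valuations $v_\mathfrak{p} \circ \phi$ and $v_\mathfrak{p} \circ \psi$ on $K = \mathrm{Frac}(A)$ have the same center, hence are proportional, and in characteristic $p$ their ratio must be a $p$-power that is uniform across all such $\mathfrak{p}$. This uniform $p$-power is the exponent $n$ that pins down the single $W_n$ filling $\Spec B$.
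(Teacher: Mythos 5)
Your reductions to the affine statement about two injections $\phi,\psi\colon A\hookrightarrow B$ are fine, but both substantive steps break down. The pointwise analysis presupposes that $\Spec B$ admits $\bF_p$-algebra maps to $\bar{\bF}_p$, i.e.\ that closed points of $Y$ have finite residue fields. This holds only when the base field of $Y$ is algebraic over $\bF_p$: if $B$ contains a field of positive transcendence degree over $\bF_p$ (say $Y$ is a variety over $\bF_p(t)$ or over an uncountable field of characteristic $p$), then there are \emph{no} $\bF_p$-algebra homomorphisms $B\to\bar{\bF}_p$ whatsoever, since such a map would embed the base field into $\bar{\bF}_p$. Your Galois-twist argument then produces no integers $n_y$ at all. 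The method is thus confined to varieties over $\bar{\bF}_p$, which is precisely the previously known case due to the second author; the paper states explicitly that it did not succeed in reducing the general statement to that case, and instead passes to an uncountable algebraically closed base field for $Y$, reduces $X$ to $\bA^1_k$ and $Y$ to a smooth affine curve, and then argues differently according to whether the field of constants $k$ is infinite (an argument identifying the \'etale equivalence relations $R(f')=R(g')$ and pinning down the resulting automorphism of $\bP^1$ by a multiplicativity trick) or finite (an intersection-theoretic degree count against graphs of Frobenius in $\bP^1\times\bP^1$).

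Second, even in the finite-field setting where your pointwise data exists, the uniformity step --- which you rightly flag as the heart of the matter --- is not carried out, and the route you sketch does not work. Two valuations on $\mathrm{Frac}(A)$ obtained by restricting $v_{\mathfrak p}$ along $\phi$ and $\psi$ and having the same center on $\Spec A$ need \emph{not} be proportional (distinct divisorial valuations routinely share a center), and the assertion that ``their ratio must be a $p$-power uniform across all $\mathfrak p$'' is essentially a restatement of the proposition rather than an argument for it. What you actually have is a countable family of proper closed subsets $W_n$ whose union contains all closed points, and such a family need not contain a dense member; something quantitative is required to rule this out. The paper supplies it in the finite-field case by the estimate
\[
  \deg(f)\, q^d - C \ \leq\ \# S_d \ \leq\ \sum_{|m|\leq d/2} \deg \overline Z_m \ \leq\ d\, q^{d/2}\bigl(\deg(f)+\deg(g)\bigr),
\]
which for $d\gg 0$ forces some $\overline Z_m$ to be all of $\overline Y$, and in the remaining cases by the uncountability of the base field (so that a countable union of constructible sets covering the closed points must contain a dense open one). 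Without an input of this kind your argument does not close.
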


In the case of varieties over finite fields, \pref{propABC:top-equal} has been proven earlier by the second author \cite{Stix}*{\S 2}. The method of \cite{Stix} applies in the more general case only partially, and at one point we need to make $Y$ to be of finite type over an uncountable field in order to reduce the problem to the case of curves. 

\medskip

We also describe a proof for the implication \ref{thmitem:H1} $\Rightarrow$ \ref{thmitem:top} \secref{sec:artin-schreier} as \pref{prop:equal on cohomology} for the convenience of the reader. We again reduce to the case of curves, and use Artin--Schreier theory to conclude. This step is similar to the proof of  \cite{Tamagawa2002:Pi1ofCurevsPositiveCharacteristic}*{Proposition~1.24} by Tamagawa.

\medskip

Before we embark on the proof, we discuss examples illustrating that the assumptions integral, affine and finite type in \tref{thm:main} are actually necessary.

\begin{example}
Let $Y=X = \Spec \bF_p[u,v]/(uv)$. Let $f\colon X \to X$ be defined by $f(u,v) = (u,v^p)$, i.e., identity on one component $C=\{v=0\}$ and Frobenius on the other $D=\{u=0\}$, and let $g=\id_X$. Since restriction
\[
\rH^1(X,\bF_p) \inj \rH^1(C,\bF_p) \oplus \rH^1(D,\bF_p)
\]  
is injective, and Frobenius acts trivially on cohomology, we find that $f^* = g^*$ on $\rH^1(X,\bF_p)$. But assertion \ref{thmitem:Frob} of \tref{thm:main} does not hold.
\end{example}

\begin{example}
The case $X=Y=\bP_k^1$ over a field $k$ of characteristic $p>0$ illustrates that the assumption that $X$ is affine cannot be dropped easily. Indeed, $\bP_k^1$ has  \'etale fundamental group $\pi_1(\bP^1_k) = \pi_1(k)$ and many nontrivial separable $k$-linear endomorphisms which all induce the identity on $\pi_1(\bP^1_k)$.
\end{example}

\begin{example}
Let $X = \bA^1_k$ with $k$ algebraically closed and let $Y = \Spec(\dO_{X,0}^\sh)$ be the strict henselisation in $0 \in \bA^1_k$. Then $\pi_1(Y) =1$ and so the distinct maps $Y \to X$ by composing the standard map with a translation of $\bA^1_k$ all induce the same map $\pi_1(Y) \to \pi_1(\bA^1_k)$.
\end{example}

\section{Preliminaries on base fields} 
\label{sec:perf}
\subsection{Reminder on inverse perfection}
\label{subsec:perfection}

Recall that for an $\bF_p$-algebra $R$, the inverse perfection is the ring
\[
  R^\perf = \varprojlim_F\, R = \lim \big(\cdots \to R \xrightarrow{F}  R \xrightarrow{F}  R \xrightarrow{F} R \big)
\]
where $F\colon R \to R$ is the Frobenius. The ring $R^\perf$ is perfect (Frobenius is an isomorphism) and inverse perfection $R^\perf$ is right adjoint to the inclusion of perfect $\bF_p$-algebras.  

\begin{lemma} \label{lem:perfection for fields}
  Inverse perfection has the following effect on fields.
  \begin{enumerate}
    \item \label{lemlabel:perfection1}
      If $R$ is a reduced $\bF_p$-algebra, then $R^\perf = \bigcap_{n \geq 0} R^{p^n}$.
    \item \label{lemlabel:perfection2}
      For a field $K$ the inverse perfection $K^\perf$ is a field. 
    \item \label{lemlabel:perfection3}
      The natural map $K^\perf \to K(T)^\perf$ is an isomorphism. 
    \item \label{lemlabel:perfection4}
      If $L/K$ is a finite purely inseparable field extension, then $K^\perf \to L^\perf$ is an isomorphism.
    \item \label{lemlabel:perfection5}
      Let $L/K$ be a finitely generated field extension. Then $L^\perf/K^\perf$ is a finite separable extension. 
  \end{enumerate}
\end{lemma}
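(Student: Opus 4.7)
For (1), I would unravel the inverse limit: an element of $R^\perf$ is a compatible sequence $(r_n)_{n \geq 0}$ with $r_{n+1}^p = r_n$, and reducedness of $R$ forces uniqueness of $p^n$-th roots (any two roots $s,s'$ satisfy $(s-s')^{p^n}=0$), so projection to the zeroth coordinate identifies $R^\perf$ with $\bigcap_n R^{p^n} \subseteq R$. Part (2) then follows at once: $K^\perf$ is a subring of the field $K$, and it is stable under inversion because $x = y^{p^n}$ gives $x^{-1} = (y^{-1})^{p^n}$.

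For (3), any $f \in K(T)^\perf$ lies in every $K(T)^{p^n}$ by (1). Writing $f = a/b$ in lowest terms with $a,b \in K[T]$ and $h = c/d$ in lowest terms a $p^n$-th root of $f$, the identity $a d^{p^n} = b c^{p^n}$ combined with $\gcd(a,b) = \gcd(c^{p^n},d^{p^n}) = 1$ in the UFD $K[T]$ forces $a = u c^{p^n}$ and $b = u d^{p^n}$ for some $u \in K^\times$. Hence $\deg a$ and $\deg b$ are divisible by $p^n$ for every $n$, so $a, b, c, d \in K^\times$ and $f = (c/d)^{p^n} \in K^{p^n}$ for all $n$, whence $f \in K^\perf$. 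For (4), if $L/K$ is purely inseparable of degree $p^e$ then $L^{p^e} \subseteq K$, so for any $r \in L^\perf$ and any $m \geq 0$ we have $r \in L^{p^{m+e}} = (L^{p^e})^{p^m} \subseteq K^{p^m}$, giving $L^\perf \subseteq K^\perf$; the reverse inclusion is functorial.

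For (5), the plan is to reduce to the separable case. Choose a transcendence basis $T_1, \ldots, T_d$ of $L$ over $K$, set $E = K(T_1, \ldots, T_d)$, and let $M \subseteq L$ be the separable closure of $E$ inside $L$, so that $M/E$ is finite separable and $L/M$ is finite purely inseparable. Iterating (3) gives $E^\perf = K^\perf$, and (4) gives $L^\perf = M^\perf$; it therefore suffices to show that $M^\perf/E^\perf$ is finite separable.

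For this, I would pass to a finite Galois closure $\widetilde M/E$ with group $G$. Each $g \in G$ commutes with Frobenius (being a ring homomorphism) and hence preserves $\widetilde M^\perf = \bigcap_n \widetilde M^{p^n}$. The fixed field is $\widetilde M^\perf \cap E$, and the crux is the observation that if $x \in E$ can be written as $y^{p^n}$ with $y \in \widetilde M$, then $g(y)^{p^n} = g(x) = x = y^{p^n}$ forces $g(y) = y$ by the uniqueness of $p^n$-th roots, so $y \in \widetilde M^G = E$ and therefore $x \in E^{p^n}$. Thus $(\widetilde M^\perf)^G = E^\perf$, so $\widetilde M^\perf/E^\perf$ is Galois with group a quotient of $G$ and in particular finite separable; then the intermediate extension $E^\perf \subseteq M^\perf \subseteq \widetilde M^\perf$ is finite separable as well. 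I expect this Galois-descent step in (5) to be the main obstacle, but once the uniqueness-of-$p^n$-th-roots trick from (1) is in place, the argument closes up cleanly.
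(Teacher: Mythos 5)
Your proposal is correct, and parts (1), (2) and (4) run along the same lines as the paper's (which essentially declares them obvious). The interesting divergences are in (3) and (5). For (3) you argue with lowest-terms representatives in the UFD $K[T]$ and conclude that $\deg a$ and $\deg b$ are divisible by $p^n$ for all $n$; the paper instead observes that $v(f)$ is infinitely $p$-divisible for every discrete valuation $v$ of $K(T)$ trivial on $K$ — these are the same idea in different clothing. For (5) your route is genuinely different: after the same reduction to a finite separable extension $M/E$, you pass to a Galois closure $\widetilde M/E$, note that $G=\Gal(\widetilde M/E)$ preserves $\widetilde M^\perf$, use uniqueness of $p^n$-th roots to identify $(\widetilde M^\perf)^G$ with $E^\perf$, and invoke Artin's lemma to conclude that $\widetilde M^\perf/E^\perf$ is finite Galois; this is clean and correct. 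The paper instead tracks, for $x\in L^\perf$, the minimal polynomials $P_n$ of the roots $y_n$ with $y_n^{p^n}=x$, shows that the Frobenius-twisted polynomials $Q_n$ stabilize and hence have coefficients in $K^\perf$, and deduces separability and the explicit bound $[L^\perf:K^\perf]\leq[L:K]$. Your argument buys a shorter, more structural proof at the cost of that degree bound (you only control $[\widetilde M^\perf:E^\perf]$ by $\#G$, which may exceed $[L:K]$); since the lemma only asserts finiteness and separability, nothing is lost for the purposes of this paper.
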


\begin{proof}
\eqref{lemlabel:perfection1} is obvious, and \eqref{lemlabel:perfection2} follows from \eqref{lemlabel:perfection1}. 

\eqref{lemlabel:perfection3} If $f \in K(T)$ is a $p^n$-th power for all $n$, then for all discrete valuations $v$ of $K(T)$ we have $v(f) \in \bZ$ is arbitary $p$-divisible, hence $v(f) = 0$. It follows that $f \in K$. If $f = g^{p^n}$, then also $g \in K$, hence $f$ is also a $p^n$-th power for all $n$ as an element of $K$. This proves the claim. 

\eqref{lemlabel:perfection4} For large enough $n$ we have $L^{p^n} \subseteq K$. Assertion \eqref{lemlabel:perfection4} follows immediately from \eqref{lemlabel:perfection1}.

\eqref{lemlabel:perfection5} By \eqref{lemlabel:perfection3}, it suffices to treat the case where $L/K$ is a finite extension. By \eqref{lemlabel:perfection4} we may even assume that $L/K$ is finite and separable.  Let $x \in L^\perf$ and, for all $n \geq 0$, let $P_n(T)$ be the minimal polynomial over $K$ of $y_n \in L$  where $(y_n)^{p^n} = x$.

Let $Q_n(T)$ be the polynomial $P_n(T)$ with coefficients raised to $p^n$-th powers. Then $Q_n(x) = (P_n(y_n))^{p^n} = 0$, and since $L^{p^n}/K^{p^n}$ is isomorphic to $L/K$ as field extension via the $n$-th power of Frobenius, the polynomial $Q_n(T)$ is the minimal polynomial of $x \in L^{p^n}$ over $K^{p^n}$. This means that for $m \geq n$ the polynomial $Q_m(T)$ divides $Q_n(T)$ in $K^{p^n}$, but since both polynomials are monic of the same degree, they are in fact equal. This means that $Q_n(T)$ is independent of $n$, and hence has coefficients in $K^\perf$.  As $Q_0(T) = P(T)$, this polynomial is separable and of degree bounded by $[L:K]$. It follows that $L^\perf$ is a separable extension of $K^\perf$ with all elements of degree bounded by $[L:K]$. This shows that $[L^\perf:K^\perf] \leq [L:K]$ is finite.
\end{proof}

\begin{notation}
For a scheme $X$ over $\bF_p$ we write
\[
  k_X := \rH^0(X,\dO_X)^\perf.
\]
The ring $k_X$ is functorial in $X$ and the canonical map $X \to \Spec(k_X)$ is the universal map to affine schemes $\Spec(R)$ with $R$ a perfect $\bF_p$-algebra.
\end{notation}

\subsection{Reminder on the field of constants}
\label{subsec:varieties}

A variety over a field $k$ is a scheme $X$ together with a separated morphism $X \to \Spec(k)$ of finite type. The field $k$ is not unique, but there is usually a universal one. 

\begin{prop}
\label{prop:universal constants}
  Let $X$ be an integral variety over a field $k$. Then there is a field $L_X \subseteq \rH^0(X,\dO_X)$ such that, for every field $L$, every map $X \to \Spec(L)$  uniquely factors over the map induced by an inclusion $L \inj L_X$. In particular, $L_X$ is the unique maximal subfield of $\rH^0(X,\dO_X)$ containing all other subfields and is functorial in $X$.

  Moreover, $L_X$ is a finite extension of $k$, and if $X$ is normal then $X$ is geometrically integral as a variety over $L_X$.
\end{prop}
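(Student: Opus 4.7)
The plan is to take $L_X := \tilde k \cap \rH^0(X, \dO_X)$, where $\tilde k \subseteq K(X)$ denotes the relative algebraic closure of $k$ in the function field of $X$. Since $K(X)/k$ is finitely generated, $\tilde k / k$ is a finite extension (a standard fact in field theory), so $L_X$ is a $k$-subalgebra of the finite-dimensional $k$-vector space $\tilde k$; being an integral domain of finite dimension over $k$, it is automatically a field of finite degree over $k$.

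The universal property is the core of the argument, and the main obstacle lies in ruling out transcendental elements inside an arbitrary subfield of $\rH^0(X, \dO_X)$. A morphism $X \to \Spec(L)$ is the same as an embedding $L \inj \rH^0(X, \dO_X) \subseteq K(X)$, so it suffices to show $L \subseteq \tilde k$. Suppose for contradiction that some $f \in L$ is transcendental over $k$. Then $f$ defines a dominant $k$-morphism $X \to \bA^1_k$ whose image is cofinite in $\bA^1_k$ by Chevalley's theorem, missing only finitely many closed points. For every nonzero $q \in \bF_p[T]$, the element $q(f)$ lies in the field $L$ and is nonzero (since $f$ is transcendental over $\bF_p$), hence is a unit of $\rH^0(X, \dO_X)$; this forces the image of $f$ to avoid $V(q) \subseteq \bA^1_k$. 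Letting $q$ range over $\bF_p[T] \setminus \{0\}$, the union $\bigcup_q V(q)$ contains every closed point of $\bA^1_k$ whose residue field is generated over $k$ by an element of $\overline{\bF_p}$; since $\overline{\bF_p}$ is infinite while each $\Gal(\bar k / k)$-orbit on it is finite, this is an infinite set of forbidden closed points, contradicting cofiniteness of the image. Hence $L \subseteq L_X$, and both the factorization through $L_X$ and its uniqueness are immediate; in particular, $L_X$ is the unique maximal subfield of $\rH^0(X, \dO_X)$.

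Finally, suppose $X$ is normal. Every $\alpha \in \tilde k$ is integral over $k$ and hence over each local ring $\dO_{X,x}$; normality means $\dO_{X,x}$ is integrally closed in $K(X)$, so $\alpha \in \dO_{X,x}$ for each $x$, giving $\tilde k \subseteq \rH^0(X, \dO_X)$ and therefore $L_X = \tilde k$. By construction, $L_X$ is algebraically closed in $K(X)$, which already implies that $X$ is geometrically irreducible over $L_X$; the remaining geometric reducedness amounts to separability of the finitely generated extension $K(X)/L_X$, which follows from the standard fact that a finitely generated field extension whose base is algebraically closed in it is automatically separable. Combining, $X \otimes_{L_X} \overline{L_X}$ is integral, i.e., $X$ is geometrically integral over $L_X$.
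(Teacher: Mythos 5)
Your construction of $L_X$ and your proofs of the universal property, of maximality, and of finiteness over $k$ are correct and essentially the same as the paper's: the paper also takes $L_X$ to be the integral closure of $k$ in $\rH^0(X,\dO_X)$ and excludes a transcendental element $t$ of a subfield $L$ by playing the open image of the dominant map $X\to\Spec(k[t])$ against the fact that $X\to\Spec(\bF[t])$ (with $\bF$ the prime field) hits only the generic point; your version just carries out this counting inside $\bA^1_k$ directly, by exhibiting infinitely many forbidden closed points. One cosmetic remark: the statement allows arbitrary $k$, so in characteristic $0$ you should replace $\bF_p[T]$ and $\overline{\bF_p}$ by $\bZ[T]$ and $\bZ$; the argument is otherwise unchanged.

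The last step, however, contains a genuine error: the ``standard fact'' you invoke --- that a finitely generated field extension whose base field is algebraically closed in it is automatically separable --- is false in positive characteristic. Take $k=\bF_2(s,t)$ and $K=k(x)[y]/(y^2-sx^2-t)$. A direct computation (decompose elements of $k$ with respect to the basis $1,s,t,st$ of $k$ over $k^2$) shows that $k$ is algebraically closed in $K$; yet $K\otimes_k\bar k$ contains the nonzero nilpotent $y+s^{1/2}x+t^{1/2}$, so $K/k$ is not separable. Hence the normalization $X$ of $\Spec\bigl(k[x,y]/(y^2-sx^2-t)\bigr)$ is a normal integral affine variety with $L_X=k$ which is geometrically irreducible but not geometrically reduced over $L_X$. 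What algebraic closedness of $L_X$ in $K(X)$ actually yields is geometric irreducibility (and connectedness); geometric reducedness is equivalent to separability of $K(X)/L_X$, which can fail even for regular $X$. Be aware that this is not only a defect of your write-up: the same example shows that the final clause of the Proposition is too strong as stated (the citation of EGA IV~4.6.3 delivers geometric irreducibility, not integrality). Fortunately, only irreducibility enters the later arguments of the paper, where an irreducible component of the base change $Y_K$ is chosen and then normalized; your proof of everything up to that point stands.
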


\begin{proof}
The proof is essentially identical to the proof in the affine case \cite{Tamagawa1997:GrothendieckConjectureAffine}*{Lemma~4.2} by Tamagawa. We spell out the details for the convenience of the reader: $L_X$ is simply the integral closure of $k$ inside $R = \rH^0(X,\dO_X)$. We claim that this is already the desired field $L_X$. If there is a subfield $L \subseteq R$ not contained in $L_X$, then there is an element $t \in L \setminus L_X$ which is necessarily transcendental over $L_X$ and hence over $k$. Let $\bF$ be the prime field of $k$, so that $\bF(t) \subseteq L \subseteq R$. The inclusion $\bF[t] \subseteq k[t] \subseteq R$ induce maps 
\[
  X \to \Spec (k[t]) \to \Spec (\bF[t]).
\]
The first map is a dominant $k$-linear map of varieties over $k$, hence by Chevalley's Theorem has constructible image, and due to dimension $1$ has in fact an open image. The composition with the second map projects onto the generic point. This is absurd.

For finiteness of $L_X$ over $k$, notice that $L_X$ is contained in the field of rational functions $k(X)$, which is finitely generated over $k$, so $L_X$ is finitely generated over $k$. It is also algebraic over $k$ by construction, and therefore finite over $k$. 

For the final assertion, if $X$ is normal, then $L_X$ concides with the algebraic closure of $k$ in $k(X)$. So $L_X$ is algebraically closed in $k(X)$, and the result follows from \cite{EGAIV2}*{Corollaire~4.6.3}.
\end{proof}

If $X$ is not normal, then it might not be geometrically irreducible over $L_X$. A concrete example is as follows. 

\begin{example} \label{ex:base change fails integral}
Assuming characteristic not $2$, we may take a separable quadratic extension $K = k(\sqrt{a})$ with nontrivial $\sigma \in \Gal(K/k)$ and glue $0 \in \bA^1_K$ to itself via $\sigma$. The resulting scheme is $X = \Spec(A)$ with 
\[
  A = \{f(t) \in K[t] \ ; \ f(0) \in k\} = k[t,y]/(at^2 -  y^2)
\]
where $y = \sqrt{a} t$. This $X$ is integral, but $X_K$ is not, since 
\[
  A\otimes_k K = K[t,y]/(y^2 - at^2) = K[u,v]/(uv)
  \quad 
  \text{with $u = y - \sqrt{a} t$ and $v = y + \sqrt{a} t$.}
\] 
Note that if $U = \Spec(A[\frac 1 t])$ is the complement of the singular point, then $L_U = K \neq L_X$.
\end{example}

\subsection{Inverse perfection as a field of constants}

If the base field is perfect, then we can use the inverse perfection to detect the maximal field of constants. 

\begin{defi}
  A \textbf{variety over a perfect field} is a scheme $X$ such that there is a perfect field $k$ and a separated map of finite type $X \to \Spec(k)$. 
\end{defi}

\begin{lemma}
\label{lem:kX as LX}
  Let $X$ be an integral scheme over $\bF_p$. Then the following are equivalent.
  \begin{enumerate}[label=(\alph*)]
    \item \label{lemitem:varperfect1}
      $X$ is a variety over a perfect field.
    \item \label{lemitem:varperfect2}
      The ring $k_X$ is a perfect field and the map $X \to \Spec(k_X)$ is separated of finite type.
  \end{enumerate}
  If both properties hold, then $k_X$ is the unique maximal subfield $L_X$ of $\rH^0(X,\dO_X)$. 
\end{lemma}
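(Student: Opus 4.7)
The implication \ref{lemitem:varperfect2} $\Rightarrow$ \ref{lemitem:varperfect1} is immediate from the definition of a variety over a perfect field. The plan is therefore to establish \ref{lemitem:varperfect1} $\Rightarrow$ \ref{lemitem:varperfect2} together with the identification $k_X = L_X$. Assume $X$ is a variety over some perfect field $k$. Proposition \ref{prop:universal constants} furnishes the finite field extension $L_X/k$ inside $R := \rH^0(X,\dO_X)$. Since $k$ is perfect, the algebraic extension $L_X/k$ is automatically separable, so $L_X$ is itself perfect. Rewriting $k_X = \bigcap_n R^{p^n}$ via Lemma \ref{lem:perfection for fields}\eqref{lemlabel:perfection1}, the equality $L_X = L_X^{p^n}$ for all $n$ immediately gives $L_X \subseteq k_X$.

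The crux will be the reverse inclusion $k_X \subseteq L_X$, which I would prove by contradiction. Suppose $t \in k_X \setminus L_X$; since $L_X$ is the integral closure of $k$ in $R$, the element $t$ is transcendental over $k$. For each $n \geq 0$ there is a unique $s_n \in R$ with $s_n^{p^n} = t$ (uniqueness because $R$ is an integral domain of characteristic $p$), and each $s_n$ is again transcendental over $k$. Inside $k(X)$ we obtain a tower $k(t) \subseteq k(s_n)$ with $s_n^{p^n} = t$. A short degree count on rational functions shows that $t$ is not a $p$-th power in $k(t)$, so $T^{p^n} - t$ is irreducible over $k(t)$ and $[k(s_n):k(t)] = p^n$. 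On the other hand, since $k(X)/k$ is finitely generated, the relative algebraic closure of $k(t)$ inside $k(X)$ is a finite extension of $k(t)$ by the classical theorem on algebraic intermediate extensions in finitely generated extensions. This yields a uniform bound on $[k(s_n):k(t)]$, contradicting $p^n \to \infty$.

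Combining both inclusions gives $k_X = L_X$, which is therefore a perfect field, verifying the first half of \ref{lemitem:varperfect2} and the last sentence of the lemma. The map $X \to \Spec(k_X) = \Spec(L_X)$ is separated and of finite type: separatedness follows from the factorisation $X \to \Spec(L_X) \to \Spec(k)$ and the general fact that if $X \to Z$ is separated then $X \to Y$ is separated for any intermediate $Y$; finite type follows because any finite affine cover $\{\Spec(A_i)\}$ of $X$ with $A_i$ a finitely generated $k$-algebra automatically exhibits each $A_i$ as a finitely generated $L_X$-algebra, since $L_X \subseteq \rH^0(X,\dO_X) \subseteq A_i$ and $L_X/k$ is finite. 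The main obstacle is the middle paragraph: setting up the correct tower of subfields, verifying that $[k(s_n):k(t)]$ is exactly $p^n$ rather than a proper divisor, and appealing to finiteness of the relative algebraic closure inside a finitely generated field extension.
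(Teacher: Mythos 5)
Your proof is correct, and it differs from the paper's at the decisive step. The paper sandwiches $k_X$ between $k$ and $k(X)^{\perf}$ and invokes \lref{lem:perfection for fields}\eqref{lemlabel:perfection5} to see that $k(X)^{\perf}/k$ is finite, so that $k_X$ is automatically a perfect subfield of $R=\rH^0(X,\dO_X)$ and hence contained in $L_X$ by the maximality property of \pref{prop:universal constants}; the reverse inclusion $L_X\subseteq k_X$ then follows, exactly as in your first paragraph, from perfectness of the finite extension $L_X/k$. You instead prove $k_X\subseteq L_X$ head-on: a hypothetical $t\in k_X\setminus L_X$ is transcendental over $k$ (since $L_X$ is the integral closure of $k$ in $R$), its unique $p^n$-th roots $s_n\in R$ generate algebraic subextensions $k(s_n)/k(t)$ of degree exactly $p^n$ (irreducibility of $T^{p^n}-t$ because $t\notin k(t)^p$), and the unboundedness of these degrees contradicts the finiteness of the relative algebraic closure of $k(t)$ in the finitely generated extension $k(X)/k(t)$. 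This is a legitimate, self-contained substitute for the appeal to \lref{lem:perfection for fields}\eqref{lemlabel:perfection5}: in effect you reprove the one consequence of that lemma you need, namely that $k_X$ is algebraic over $k$, by an elementary degree count, at the cost of not obtaining the finer information (finiteness and separability of $k(X)^{\perf}/k$) that the lemma provides and that the paper uses elsewhere. A small bonus of your write-up is that you make explicit why $X\to\Spec(k_X)$ is separated of finite type (cancellation for separatedness, and the observation that a finite affine chart by finitely generated $k$-algebras is already a chart by finitely generated $L_X$-algebras), a point the paper's proof passes over in silence.
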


\begin{proof}
Assertion \ref{lemitem:varperfect2} clearly implies  \ref{lemitem:varperfect1}. So we prove the converse and assume that we have a perfect field $k$ and a separated map of finite type $X \to \Spec(k)$. 

Denote by $k(X)$ the function field of $X$. The inclusions $k \inj \rH^0(X,\dO_X) \inj k(X)$ yield inclusions
\[
  k = k^\perf \inj  k_X \inj k(X)^\perf.
\]
By \lref{lem:perfection for fields} \eqref{lemlabel:perfection5} applied to $k(X)/k$ the extension $k_X/k$ is contained in a finite (separable) extension. So $k_X$ is perfect and  \ref{lemitem:varperfect2} holds.

We now assume that both properties hold and keep the notation above. \pref{prop:universal constants} shows that $L_X$ is a finite extension of $k$, and hence is perfect. In particular, $L_X$ is contained in $\rH^0(X,\dO_X)^\perf = k_X$. The converse inclusion is obvious. 
\end{proof}

\begin{rmk}
Let $X$ be a variety over a perfect field $k$. Any open subscheme $U \subseteq X$ is a variety over a perfect field. Note that the field extension $k_X \to k_U$ can be nontrivial if $X$ is not normal, see Example~\ref{ex:base change fails integral}.
\end{rmk}

\section{Topological coincidence of maps, revisited}
\label{sec:topol}

The goal of this section is to prove the following rigidity property of maps between integral varieties. In case the base fields have characteristic $0$, the Frobenius maps have to be interpreted as the identity, and some steps in the proof can be left out. We mainly deal with the case of base fields of characteristic $p$. 

\begin{prop} \label{prop:top-equal}
  Let $Y$ be an integral scheme of finite type over a field, and let $X$ be an integral variety over a field. Let $f, g \colon Y \to X$ be two maps inducing the same map $|Y| \to |X|$, with image of positive dimension. 

  Then, there exist $a,b\geq 0$ such that $F_X^a\circ f = F_X^b \circ g$.
\end{prop}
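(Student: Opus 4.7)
The strategy is to reduce to the case where both $X$ and $Y$ are integral curves and then analyse the two function-field embeddings $f^*, g^*\colon K(X)\hookrightarrow K(Y)$. First, since $|f|=|g|$, the scheme-theoretic images agree: $\overline{f(Y)}_{\mathrm{red}} = \overline{g(Y)}_{\mathrm{red}} =: Z$, and both maps factor through $Z$. Replacing $X$ by $Z$, I may assume $f$, $g$ dominant. By \lref{lem:kX as LX} I may further assume the base fields are perfect. In characteristic $0$ the Frobenius reduces to the identity, and the statement becomes the elementary fact that two dominant morphisms to a separated scheme which coincide on underlying sets agree on the generic point and hence everywhere; so I focus on characteristic $p$.

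\textbf{Reduction to curves.}
Since $\dim Z>0$, I want to cut $Y$ by generic hyperplane sections to produce a family of curves $C\subseteq Y$ such that $f|_C$ (equivalently $g|_C$) is non-constant. When the base field of $Y$ is small, such curves may not suffice to cover $Y$; this is precisely the step where I base-change $Y$ along an embedding $k_Y\hookrightarrow \Omega$ into an uncountable algebraically closed $\Omega$, as announced in the introduction. A Bertini-type argument over $\Omega$ then produces a covering family of such curves in $Y_\Omega$. Assuming the curve case below yields $F_X^{a_C}\circ f|_C = F_X^{b_C}\circ g|_C$ for each $C$, a boundedness argument (the relevant exponents are controlled by the inseparable degree of $K(Y)/K(X)$) makes the exponents uniform. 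Since the $C$'s are schematically dense and $X$ is separated, $F_X^a\circ f = F_X^b\circ g$ holds on $Y_\Omega$, whence on $Y$ by faithfully flat descent.

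\textbf{Curve case.}
Now $\dim X=\dim Y=1$; after normalising both (harmless for the Frobenius-power conclusion, as normalization is finite birational), assume $X$, $Y$ smooth. The topological coincidence translates as follows: for every discrete valuation $v$ on $K(Y)$ corresponding to a closed point $y \in Y$, the restricted valuations $v\circ f^*$ and $v\circ g^*$ on $K(X)$ share the valuation ring $\dO_{X,f(y)}=\dO_{X,g(y)}$, though they may differ by the respective ramification indices $e_y(f)$, $e_y(g)$. Writing $L_1:=f^*K(X)$ and $L_2:=g^*K(X)\subseteq K(Y)$ and $\varphi := g^* \circ (f^*)^{-1}\colon L_1\to L_2$, this rigidity forces $L_1$ and $L_2$ to differ only by a Frobenius twist inside $K(Y)$: there exist $a,b\geq 0$ with $(f^*\alpha)^{p^a} = (g^*\alpha)^{p^b}$ for all $\alpha\in K(X)$, i.e., $F_X^a\circ f = F_X^b\circ g$ at the generic point of $Y$, and hence on all of $Y$ by separatedness of $X$. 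Technically, one factors each of $f$, $g$ into a generically separable map followed by a power of Frobenius on $X$, uses the topological coincidence to show the two separable parts agree (two generically separable dominant maps of smooth projective curves which agree as maps of underlying sets must coincide, via a divisor-of-zeros/poles comparison for pulled-back rational functions), and reads off $a$, $b$ from the inseparable factors.

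\textbf{Main obstacle.}
The technical core is the curve case, specifically the step identifying the two separable parts of $f$ and $g$ and controlling the remaining inseparable ramification uniformly across all places. The uncountable-field reduction to curves is a secondary technical issue, needed to ensure that after base change $Y_\Omega$ is covered by curves $C$ on which both $f|_C$ and $g|_C$ remain non-constant.
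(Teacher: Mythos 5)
Your overall architecture (base change $Y$ to an uncountable algebraically closed field, cover it by curves on which $f$ and $g$ remain non-constant, reduce to a curve-to-curve statement) matches the paper's reduction in \S\ref{subsec:topol reduction to functions}--\ref{subsec:topol reduction to curves}, but two of your reduction steps are under-argued and the technical core is missing. On the reductions: you never explain how to bring the \emph{target} down to a curve while preserving the hypotheses (the paper embeds $X$ into $\bA^n_k$ and projects to coordinate axes, after a delicate change of coordinates ensuring each coordinate function of $f$ and of $g$ stays outside $B^p$); and the uniformity of the exponents $(a_C,b_C)$ over the covering family of curves is handed to an unexplained ``boundedness argument'', where the paper uses uncountability of the base field via \lref{lem:countable union of constructible} to find a single closed equalizer $Z_{a,b}$ containing a dense open. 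These are repairable. The genuine gap is the curve case: the assertion that ``two generically separable dominant maps of smooth projective curves which agree as maps of underlying sets must coincide'' \emph{is} the theorem in that case, and the justification you offer --- a divisor-of-zeros-and-poles comparison for $f^*\alpha$ versus $g^*\alpha$ --- does not work. The hypothesis $|f|=|g|$ only forces $\operatorname{div}(f^*\alpha)$ and $\operatorname{div}(g^*\alpha)$ to have the same \emph{support}; the multiplicities involve the ramification indices of $f$ and $g$ at each point, which are not controlled, so one cannot conclude $f^*\alpha=g^*\alpha$, nor even equality up to a scalar.

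The deeper point your sketch glosses over is that after the reduction the source curve lives over a huge field $K$ while the target curve is still over the original constant field $k$, which may be finite. Then $f(y)=g(y)$ in $|X|$ only says the two values in $K$ are $\Gal(\bar k/k)$-conjugate, with a conjugating Frobenius power that may vary from point to point; this is exactly where the hypothesis is weakest. The paper needs two different mechanisms here: for $k$ infinite, it restricts to the infinitely many points lying over $k$-rational points of $\bA^1_k$ (where the Galois ambiguity disappears), shows $f'$ and $g'$ induce the same \'etale equivalence relation on $Y$, and pins down the resulting automorphism of $\bP^1_K$ by a multiplicativity trick (\S\ref{subsec:topol infinite base field}); for $k$ finite this fails, and it instead counts points of $Y$ over $\bA^1(k_d)$ against intersection numbers with the graphs $\Gamma_m$ of Frobenius powers in $\bP^1\times\bP^1$, deriving a contradiction as $d\to\infty$ unless some $\overline Z_m$ is all of $\overline Y$ (\S\ref{subsec:topol finite base field}). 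Neither mechanism, nor any substitute, appears in your proposal. A smaller but symptomatic error: your characteristic-$0$ aside is wrong --- agreement of $|f|$ and $|g|$ at the generic point says only that both hit $\eta_X$, and gives no information about the two embeddings $K(X)\hookrightarrow K(Y)$, so the statement is not the ``elementary fact'' you cite even there.
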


Take note that we do not assume that $X$ and $Y$ are defined over the same field. If both $X$ and $Y$ are of finite type over $\bF_p$, the proposition follows from  \cite{Stix}*{Proposition~2.3}. Since we did not succeed to reduce to this case, we give a completely independent proof in the complementary case that instead exploits passing to uncountable fields. Surprisingly, this second proof strategy below needs the additional assumption that {\bf $L_X$ is infinite}, hence complementing  \cite{Stix}*{Proposition~2.3}. Below we give a complete proof in both cases. 

\subsection{Reduction to functions}
\label{subsec:topol reduction to functions}

Before embarking on the technical heart of the proof of \pref{prop:top-equal}, we will perform a few easy reductions. 

\begin{lemma} \label{lem:first-reductions}
  In the situation described in \pref{prop:top-equal}.
  \begin{enumerate}[(1)]
    \item \label{lemitem:compose with dominant}
      Let $h\colon Y'\to Y$ be a dominant map with $Y'$ an integral scheme of finite type over some field, and suppose that $fh$ (and equivalently $gh$) still has image of positive dimension. 
      
      If the assertion of \pref{prop:top-equal} holds for $fh,gh\colon Y'\to X$, then it holds for $f,g\colon Y\to X$ as well.
    \item \label{lemitem:compose with immersion}
      Let $i\colon X\to X'$ be a locally closed immersion with $X'$ an integral variety. If the assertion of \pref{prop:top-equal} holds for $if,ig\colon Y\to X'$, then it holds for $f,g\colon Y\to X$ as well.
    \item \label{lemitem:target affine} 
      Let $U \subseteq X$ be a dense open subvariety such that the intersection of $U$ with the image of $f$ (and equivalently for $g$) still has positive dimension. Denote the preimage $f^{-1}(U) = g^{-1}(U)$ by $V$ and the restrictions of $f$ and $g$ by $f_U,g_U : V \to U$. If the assertion of \pref{prop:top-equal} holds for $f_U,g_U\colon V\to U$, then it holds for $f,g\colon Y\to X$ as well.
  \end{enumerate}
\end{lemma}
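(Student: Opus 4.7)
The plan is to deduce all three statements from one common principle: two morphisms $Y \to X$ into a separated scheme, with $Y$ integral, which agree on a dense subset of $|Y|$ must coincide, since their equalizer is a closed subscheme of $Y$ containing that dense subset. In each case the pair of morphisms to compare will be $F_X^a \circ f$ and $F_X^b \circ g$, where the integers $a,b \geq 0$ are furnished by the hypothesis that \pref{prop:top-equal} holds for the relevant auxiliary pair.

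For part (1), the hypothesis yields $a,b \geq 0$ with $F_X^a \circ fh = F_X^b \circ gh$. Thus $F_X^a \circ f$ and $F_X^b \circ g$ agree on the image $h(|Y'|) \subseteq |Y|$, which is dense because $h$ is dominant. Since $X$ is separated (being a variety) and $Y$ is reduced and irreducible, the closed equalizer subscheme of $Y$ must then equal all of $Y$, giving the desired identity.

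For part (2), I would invoke the naturality of the absolute Frobenius: for any morphism of $\bF_p$-schemes $u \colon S \to T$ one has $F_T \circ u = u \circ F_S$, and hence $F_{X'}^n \circ i = i \circ F_X^n$ for every $n \geq 0$. The hypothesis $F_{X'}^a \circ (if) = F_{X'}^b \circ (ig)$ therefore rewrites as $i \circ (F_X^a \circ f) = i \circ (F_X^b \circ g)$; since a locally closed immersion is a monomorphism, cancelling $i$ on the left yields $F_X^a \circ f = F_X^b \circ g$.

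For part (3), first note that $V = f^{-1}(U) = g^{-1}(U)$, since $f$ and $g$ induce the same map on underlying sets, and that $V$ is open in $Y$. Moreover $V$ is nonempty because $U \cap f(|Y|)$ has positive dimension, and so is dense in the irreducible $Y$. The hypothesis applied to $f_U,g_U$ provides $a,b \geq 0$ with $F_U^a \circ f_U = F_U^b \circ g_U$; composing with the open immersion $U \hookrightarrow X$ and using the naturality of Frobenius as in part (2), this gives equality of $F_X^a \circ f$ and $F_X^b \circ g$ on the dense subset $V$ of $Y$, and the equalizer argument of part (1) concludes. I do not anticipate any serious obstacle: the lemma is essentially bookkeeping, making explicit that Frobenius is a natural transformation, immersions are monomorphisms, and separatedness of $X$ forces agreement on all of the integral scheme $Y$ once agreement is known on a dense subset.
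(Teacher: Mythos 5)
Your proposal is correct and follows essentially the same route as the paper: part (1) via the closed equalizer of $F_X^a\circ f$ and $F_X^b\circ g$ in the reduced scheme $Y$ (using separatedness of $X$ and dominance of $h$), part (2) by cancelling the monomorphism $i$ after commuting it past Frobenius, and part (3) by composing with the open immersion $U\hookrightarrow X$ and reducing to part (1). The paper's proof is just a terser statement of the same three observations, leaving the naturality of Frobenius implicit where you spell it out.
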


\begin{proof}
Part \ref{lemitem:compose with dominant} follows from the fact that, since $X$ is separated, the equality of $f'h=g'h$ implies that $f'$ agrees with $g'$ for any $f', g'\colon Y\to X$. 
Part \ref{lemitem:compose with immersion} similarly follows from the fact that $i$ is a monomorphism. For \ref{lemitem:target affine}, let $j \colon U\to X$ and $h \colon V\to Y$ be the inclusions. The assertion for $(f_U, g_U)\colon V\to U$ implies the assertion for $(jf_U, jg_U)=(fh, gh)\colon V\to X$, which implies the assertion for $(f, g)\colon Y\to X$ by \ref{lemitem:compose with dominant}.
\end{proof}

\begin{prop} \label{prop: target A1}
  In Proposition~\ref{prop:top-equal}, we may assume that $Y$ is an integral normal variety over an uncountable algebraically closed field $K=k_Y$ and $X = \bA^1_k$ with $k$ a field. 
\end{prop}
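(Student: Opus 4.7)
The plan is to apply \lref{lem:first-reductions} in a sequence of steps, reducing $Y$ and $X$ separately to the required special form.

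\emph{Reducing $Y$.} First replace $Y$ by its normalization $Y^\nu \to Y$, a finite birational (hence dominant) morphism. By \lref{lem:first-reductions}(1) we may assume $Y$ is normal and integral, and then \pref{prop:universal constants} gives that $Y$ is geometrically integral over its universal field of constants $L_Y = k_Y$. Now pick an uncountable algebraically closed field $K$ containing an algebraic closure of $L_Y$, and form the base change $Y_K := Y \times_{L_Y} K$. By geometric integrality, $Y_K$ is integral of finite type over $K$, and the projection $Y_K \to Y$ is faithfully flat, hence dominant. Using \lref{lem:perfection for fields}\eqref{lemlabel:perfection3} and \eqref{lemlabel:perfection5} together with $K$ being perfect and algebraically closed, one verifies $k_{Y_K} = K$. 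A second application of \lref{lem:first-reductions}(1) places $Y$ into the required form.

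\emph{Reducing $X$.} The reduced image closure $Z := \overline{f(Y)}^{\mathrm{red}} = \overline{g(Y)}^{\mathrm{red}}$ is an integral closed subvariety of $X$ of positive dimension; by \lref{lem:first-reductions}(2) we may replace $X$ with $Z$ and hence assume $f, g$ are dominant. Next, \lref{lem:first-reductions}(3) lets us shrink $X$ to a dense affine open still meeting the image in positive dimension (the generic point of $X$ lies in the image), so $X$ is affine. Choosing a finite set of generators of $\dO_X(X)$ over its base field $k$ yields a closed embedding $X \hookrightarrow \bA^n_k$, and by \lref{lem:first-reductions}(2) we may take $X = \bA^n_k$.

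\emph{Main obstacle: from $\bA^n_k$ to $\bA^1_k$.} The remaining step is the technically most delicate. Applying the $\bA^1_k$-case to each coordinate projection $t_i \colon \bA^n_k \to \bA^1_k$ produces relations $(f^* t_i)^{p^{a_i}} = (g^* t_i)^{p^{b_i}}$, but with $(a_i, b_i)$ depending a priori on $i$; the task is to find a single pair $(a, b)$ valid for all $i$ simultaneously. My approach would be to apply the $\bA^1_k$-case also to arbitrary $k$-linear combinations $\sum c_i t_i$, using that we may assume $k$ (i.e.\ $L_X$) is infinite, since the case of finite $L_X$ is covered by \cite{Stix}. Passing to the perfection $K(Y)^\perf$ of the function field of $Y$, each relation becomes $f^* t_i = F^{n_i}(g^* t_i)$ with $n_i = b_i - a_i \in \bZ$, and the further relations coming from linear combinations, combined with the additivity of $p$-th powers in characteristic $p$ and a genericity argument on coefficients in the infinite field $k$, should force $n_i$ to be independent of $i$. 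The resulting common Frobenius shift then provides the desired $F_X^a \circ f = F_X^b \circ g$, completing the reduction.
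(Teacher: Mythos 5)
Your reductions of $Y$ and $X$ follow the paper's route (base change to an uncountable algebraically closed field, restriction to an affine open, embedding into $\bA^n_k$), with two small slips: normalizing \emph{before} the base change $Y\otimes_{L_Y}K$ does not guarantee that $Y_K$ is normal, since $K/L_Y$ may be inseparable when $L_Y$ is imperfect --- the paper instead normalizes an irreducible component of $Y_K$ \emph{after} base change, and normality of $B=\rH^0(Y,\dO_Y)$ is genuinely used later; and replacing $X$ by the image closure $Z$ uses the easy converse of \lref{lem:first-reductions}\ref{lemitem:compose with immersion}, not the lemma itself (harmless, but worth noting).

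The real gap is the step from $\bA^n_k$ to $\bA^1_k$, which you correctly identify as the crux but do not actually carry out. Two concrete problems. First, you cannot discard the case of finite $k=L_X$ by citing \cite{Stix}: that result requires \emph{both} $X$ and $Y$ to be of finite type over $\bF_p$, and after your first reduction $Y$ is of finite type over an uncountable field, so \cite{Stix}*{Proposition~2.3} does not apply; the paper must (and does, in \secref{subsec:topol finite base field}) treat finite $k$ by a separate intersection-theoretic argument, and the reduction to $\bA^1_k$ must therefore work for arbitrary $k$. Second, your ``genericity argument'' faces a structural obstacle: applying the $\bA^1$-case to a linear combination $\sum c_i t_i$ yields a Frobenius exponent $m(c)$ that depends on the tuple $c$, so you are not manipulating a single polynomial identity, and some coordinate functions $f_i,g_i$ may be constant or $p$-th powers, in which case the $\bA^1$-case either does not apply or yields ambiguous exponents. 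The paper avoids all of this by reversing the order of operations: \emph{before} invoking the one-variable case it changes coordinates on $\bA^n_k$ (first $x_i\mapsto x_i+x_j$, then $x_i\mapsto x_i+h_i(x_j)$ with $h_i\in k[T]$ chosen using $\ker(h\mapsto h(f_j)+B^p)=k[T^p]$, which is where normality of $B$ enters) so that every $f_i$ and every $g_i$ lies in $B\setminus B^p$. Then the relations $F^{a_i}\circ f_i=F^{b_i}\circ g_i$ with $\min\{a_i,b_i\}=0$ force $a_i=b_i=0$ for all $i$ simultaneously, and $f=g$ follows with no genericity argument at all. Without this (or an equivalent) device, your sketch does not close.
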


\begin{proof} 
The proof proceeds in three steps. We first pick an uncountable algebraically closed field $K$ that is an extension of $L_Y$ and consider the normalization $Y_0'$ of an irreducible component $Y_0$ of the base change $Y_K = Y \otimes_{L_Y} K$. (See  \eref{ex:base change fails integral} for a case  where  $Y_K$ itself is not integral.) Note that $Y'_0$ is also integral and of finite type over the perfect field $K$.
The composition with the projection 
\[
  h\colon Y'_0 \to Y_0 \inj Y_K = Y \otimes_{L_Y} K \to Y
\]
is schematically dominant. By choosing $Y_0$ appropriately, we may also assume that for the maps $f$ and $g$ for which we are trying to prove the claim of Proposition~\ref{prop:top-equal}, the compositions $fh$ and $gh$ have still an image of positive dimension. Now apply \lref{lem:first-reductions}\ref{lemitem:compose with dominant}. 

In the second step we consider an affine open $U$ in $X$ that contains two points of $f(Y) = g(Y)$ that show this image has dimension $>0$. By \lref{lem:first-reductions}\ref{lemitem:target affine} we may replace $X$ by $U$ (and $Y$ by $f^{-1}(U) = g^{-1}(U)$; this keeps the properties of $Y$ we already adjusted) and thus assume $X$ is an affine variety over a field $k$. We now choose a closed embedding $\iota: X \inj \bA^n_k$, and \lref{lem:first-reductions}\ref{lemitem:compose with immersion} lets us assume $X = \bA^n_k$. 

In the third step we choose good coordinates on $\bA^n_k$ so that we may reduce to the coordinate projections.\footnote{The reduction to coordinate projections is obvious in characteristic $0$.} We set $B = \rH^0(Y,\dO_Y)$ and have $k_Y = \bigcap_{n \geq 0} B^{p^n} = K$ by \lref{lem:kX as LX}. 
Let $x_1, \ldots, x_n$ be the linear standard coordinates on $\bA^n_k$, and we set $f_i = f^\ast(x_i)$ and $g_i = g^\ast(x_i)$. Then not all $f_i$ are contained in $K$ because otherwise the map $f: Y \to \bA^n_k$ would factor over  a point $\Spec(K)$ and not have image of positive dimension. After replacing $f$ by a map $f' : Y \to \bA^n_k$ with $f = F_{\bA^n_k}^a \circ f'$ we may assume that there is an index $j$ with $f_j \in B \setminus B^p$. 
Indeed, if all $f_i\in B^p$, then the map $f^\ast \colon k[x_1, \ldots, x_n] \to B$ has image contained in $B^p$ (as $k$ maps into $K=K^p$) and so $f$ factors as $f = f' \circ F_Y$. But then also $f = F_{\bA^n_k} \circ f'$.

As $B^p \subseteq B$ is a proper subgroup, we may replace (by a linear coordinate change $x_i \mapsto x_i + x_j$ of coordinates of $\bA^n_k$) any $f_i \in B^p$ by $f_i + f_j$ to assume that for all $i = 1, \ldots, n$ we have $f_i \in B \setminus B^p$. 

Now we turn our attention to $g$. By factoring over a suitable power of Frobenius, again for one index $j$ we have $g_j \in B \setminus B^p$. Since we prepared the coordinates of $\bA^n_k$ carefully with respect to $f$, we are guaranteed that with the same index $j$ we have $f_j, g_j \in B \setminus B^p$.  Note that $f_j(Y) = g_j(Y)$ in $\bA^1_k$ does not factor over a closed point because otherwise $f_j$ and $g_j$ would be $p$-th powers. 

We are now modifying the coordinates on $\bA^n_k$ yet again by a transformation of the form $x_i \mapsto x_i + h_i(x_j)$ for all $i \not= j$. We pick polynomials $h_i \in k[T]$ such that  $f_i + h_i(f_j)$ and $g_i + h_i(g_j)$ are not contained in $B^p$. The polynomials $h_i$ that are unsuitable here are contained in two proper affine linear subspaces.  More precisely, we consider the maps of $k$-vector spaces
\begin{align*}
  \ph \colon k[T] \to B/B^p, \qquad h(T) \mapsto h(f_j) + B^p, \\
  \psi \colon k[T] \to B/B^p, \qquad h(T) \mapsto h(g_j) + B^p,
\end{align*}
and need to show that there is an $h_i$ such that $\ph(h_i) \notin -f_i + B^p$ and $\psi(h_i) \notin -g_i + B^p$. The subgroup  
\[
  \ker(\ph) = \{h \in k[T]  \ ; \   h(f_j) \in B^p\} 
\]
is in fact a normal subring of $k[T]$, because $B$ is normal. It contains $k[T^p]$ but not $T$, because $f_j \notin B^p$. It follows that $\ker(\ph) = k[T^p]$, and similarly $\ker(\psi) = k[T^p]$. Since we established that $\ker(\ph) = \ker(\psi)$ has codimension $\infty$ in $k[T]$, we always find a polynomial $h_i$ avoiding the two forbidden affine subspaces.\footnote{Unless $k=\bF_2$, the precise form of $\ker(\ph)$ and $\ker(\psi)$ is not important, only that these are proper subspaces.}

We have now achieved that the maps $f,g : Y \to \bA^n_k$ are coordinatewise not $p$-th powers. In particular, the coordinate functions $f_i, g_i$ are not constant since otherwise $k(f_i)$ or $k(g_i)$ are contained in $K$, the maximal subfield of $B = \rH^0(Y,\dO_Y)$, and thus $p$-th powers contrary to our preparations. Since $|f| = |g|$ also implies $|f_i| = |g_i|$, we may apply now  Proposition~\ref{prop:top-equal} to these coordinate functions and deduce that there are $a_i,b_i \geq 0$ with 
\[
  F^{a_i}_{\bA^1_k} \circ f_i =  F^{b_i}_{\bA^1_k} \circ g_i.
\]
After canceling powers of Frobenius we may assume that $\min\{a_i,b_i\} = 0$ for each $i$. But since neither $f_i$ nor $g_i$ is a $p$-th power, we obtain $a_i = b_i = 0$ for all $i$. It follows that $f = g$.
\end{proof}

\subsection{Reduction to smooth curves} 
\label{subsec:topol reduction to curves}

We are going to reduce further to ``generically \'etale'' maps from a connected smooth affine curve to the affine line.

\begin{lemma} \label{lem:dim Y  equals 1}
  In Proposition~\ref{prop:top-equal}, in addition to the reduction of 
  \pref{prop: target A1},  we may assume that $Y$ is a connected smooth affine curve.
\end{lemma}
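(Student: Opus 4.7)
The strategy is to combine the elementary reductions in \lref{lem:first-reductions} with a Bertini--uncountability argument to deduce the general case from the case of smooth affine curves.

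\emph{Elementary reductions.} I would first apply \lref{lem:first-reductions}\ref{lemitem:compose with dominant} to the dominant open immersion of the smooth locus $Y^{\mathrm{sm}} \inj Y$, which is dense because $Y$ is normal over the perfect field $K$, and then again to an affine open $U \subseteq Y^{\mathrm{sm}}$ meeting the fibers of $f$ over at least two distinct points of the image. These moves preserve the hypothesis that the image has positive dimension, and allow me to assume that $Y$ is a smooth integral affine variety over $K$ of some dimension $d \geq 1$.

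\emph{Bertini.} Next, I would fix a closed embedding $Y \inj \bA^N_K$. By Bertini applied over the infinite algebraically closed field $K$, there is a Zariski-open subvariety $S^\circ$ of the parameter space of $(d-1)$-tuples of affine hyperplanes in $\bA^N_K$ such that each $C_s := Y \cap H_{1,s} \cap \cdots \cap H_{d-1,s}$ for $s \in S^\circ$ is a smooth integral affine curve, and such that the restrictions $f|_{C_s}, g|_{C_s} \colon C_s \to \bA^1_k$ are both non-constant. The latter two conditions are generic, as $f, g$ are non-constant on $Y$ and thus fail to remain so only on proper closed subvarieties of the parameter space. For each such $s$, the pair $(f|_{C_s}, g|_{C_s})$ then satisfies all hypotheses of \pref{prop:top-equal}.

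\emph{Uniformization of exponents.} Granting \pref{prop:top-equal} for smooth affine curves, I obtain for each $s \in S^\circ(K)$ integers $a_s, b_s \geq 0$ with $f|_{C_s}^{p^{a_s}} = g|_{C_s}^{p^{b_s}}$. For a fixed pair $(a, b) \in \bN^2$, the locus $T_{a,b} \subseteq S^\circ$ where this equation holds is constructible: on the total family $\mathcal{C} \to S^\circ$ the global function $h_{a,b} := f_{\mathcal{C}}^{p^a} - g_{\mathcal{C}}^{p^b}$ vanishes on $C_s$ iff $s \in T_{a,b}$, so $T_{a,b}$ is the complement in $S^\circ$ of the image under Chevalley's theorem of the open locus $\{h_{a,b} \neq 0\} \subset \mathcal{C}$. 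Since $S^\circ = \bigcup_{(a,b)} T_{a,b}$ is a countable union of constructibles and $S^\circ$ is irreducible over the uncountable algebraically closed field $K$, a standard Baire-category argument for varieties yields that some $T_{a_0, b_0}$ is dense in $S^\circ$ and hence contains a Zariski-dense open $U \subseteq S^\circ$. The union $\bigcup_{s \in U} C_s$ is then dense in $Y$, since a sufficiently general point of $Y$ lies on some $C_s$ with $s \in U$, and $Y$ being integral implies that the equation $f^{p^{a_0}} = g^{p^{b_0}}$ holds throughout $\rH^0(Y, \dO_Y)$, i.e.\ $F_X^{a_0} \circ f = F_X^{b_0} \circ g$.

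The main obstacle is precisely this uniformization step: without the uncountability of $K = k_Y$ secured in \pref{prop: target A1}, one would be stuck with Frobenius exponents varying unboundedly with $s$ and no way to glue the curve-wise equalities back together on $Y$.
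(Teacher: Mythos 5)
Your proof is correct and rests on the same two pillars as the paper's: cover $Y$ by curves to which the curve case applies, then use the uncountability of $K$ together with a countable-union-of-constructibles argument (\lref{lem:countable union of constructible}) to make the Frobenius exponents $(a,b)$ uniform. The implementation differs in two places. To produce the curves, the paper invokes \cite{mumford:AV}*{Lemma, p.\ 56} to pass an integral curve $C_y$ through each closed point $y$ and a second point $z$ with $f(y)\neq f(z)$, and then normalizes $C_y$ individually; this sidesteps Bertini entirely and never needs $Y$ itself to be smooth. You instead cut $Y$ by generic linear sections, which works but quietly relies on Bertini smoothness and irreducibility for the full linear system of hyperplanes in characteristic $p$ (true, but worth a word). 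Second, the paper uniformizes directly on $Y$: the equalizers $Z_{a,b}$ of $F_X^a\circ f$ and $F_X^b\circ g$ are closed subschemes whose $K$-points cover $Y(K)$, so \lref{lem:countable union of constructible} applied to $Y$ itself yields a dense $Z_{a,b}$, hence $Z_{a,b}=Y$ by reducedness. Your uniformization on the parameter space $S^\circ$ reaches the same conclusion but needs the additional (true, though only asserted in your sketch) incidence-variety fact that $\bigcup_{s\in U}C_s$ is dense in $Y$ when $U\subseteq S^\circ$ is dense open --- a dense open of $S^\circ$ need not meet the locus of sections through a \emph{given} point, so this requires the standard dominance argument on the incidence correspondence; working with the $Z_{a,b}\subseteq Y$ directly avoids the issue. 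Your closing remark on why uncountability is indispensable matches the paper's own motivation exactly.
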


\begin{proof}
We need to argue that Proposition~\ref{prop:top-equal} holds in the scenario of \pref{prop: target A1}. So let $Y$ be an integral variety over an uncountable algebraically closed field $K$, and let $f,g : Y \to \bA^1_k$ be maps with $|f| = |g|$ having image of dimension $>0$. For $a,b \geq 0$, let 
\[
  Z_{a,b}\subseteq Y
\]
be the equalizer of $F_X^a\circ f$ and $F_X^b\circ g$, a closed subscheme of $Y$. We need to show that there are suitable $a,b$ with $Z_{a,b} = Y$.

We first note that, $f,g$ having image of dimension $>0$, implies that the generic point and at least one closed point  of $\bA^1_k$ is in the image. In particular, the  induced map of $f$ and $g$ on closed points $Y(K) \subseteq |Y| \to |\bA^1_k|$ is non-constant. 

For every $y\in Y(K)$, pick an integral curve $C_y \subseteq Y$ passing through $y$ and another point $z$, depending on $y$, with $f(y) \not= f(z)$, see \cite{mumford:AV}*{Lemma, p.\ 56}. Let $D_y$ be an affine open subset of the normalization of $C_y$ such that the image of the induced map $\gamma\colon D_y\to Y$ contains $y$ and $z$. Thus $\gamma f$ and $\gamma g$ are not constant, and by the assumed case of the result applied to $f\gamma, g\gamma\colon D_y\to X$, we see that $C_y(K) \subseteq Z_{a,b}(K)$ for some $a,b \geq 0$. In particular, $y\in Z_{a,b}$. Since this holds for all $y \in Y(K)$, by the \lref{lem:countable union of constructible} below there is a nonempty open $U \subseteq Y$ contained in some $Z_{a,b}$. Since $Z_{a,b}$ is closed and $Y$ is reduced, we have $Y = Z_{a,b}$ and Proposition~\ref{prop:top-equal} holds for $f,g: Y \to \bA^1_k$. 
\end{proof}

\begin{lemma} \label{lem:countable union of constructible}
  Let $Y$ be an integral scheme of finite type over an uncountable algebraically closed field $K$ and let $\{Z_i\}_{i\in I}$ be a countable family of constructible subsets of $Y$ such that
  \[ 
    Y(K) = \bigcup_{i\in I} Z_i(K).
  \] 
  Then there exists an $i\in I$ such that $Z_i$ contains a dense open subset of $Y$. 
\end{lemma}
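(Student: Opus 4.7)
The plan is to use the standard dichotomy for constructible sets in an irreducible scheme, then reduce to a statement about $K$-points of $\mathbb{A}^n_K$ being unavoidable by countably many proper closed subvarieties.

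\smallskip

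\emph{Step 1 (the dichotomy).} For each $i\in I$, since $Y$ is irreducible and $Z_i$ is constructible, either $Z_i$ contains a dense open subset of $Y$, or the closure $\overline{Z_i}$ is a proper closed subscheme of $Y$. (The first case occurs exactly when $\overline{Z_i}=Y$: writing $Z_i$ as a finite union of locally closed sets $U_j \cap F_j$, at least one $F_j$ must equal $Y$ if $\overline{Z_i}=Y$, and then the corresponding $U_j$ is a nonempty, hence dense, open contained in $Z_i$.) If the first alternative occurs for some $i$, we are done. So assume for contradiction that $V_i := \overline{Z_i}$ is a proper closed subset of $Y$ for every $i\in I$. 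We then have $Y(K) = \bigcup_i Z_i(K) \subseteq \bigcup_i V_i(K)$, and it suffices to show that the $K$-points of $Y$ cannot be covered by countably many proper closed subvarieties.

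\smallskip

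\emph{Step 2 (reduction to $\mathbb{A}^n_K$).} Let $U\subseteq Y$ be a nonempty affine open. For any proper closed $V\subsetneq Y$ we have $V\cap U \subsetneq U$ (otherwise $V\supseteq U$ forces $V=Y$ by irreducibility). Since $U(K)=\bigcup_i (V_i\cap U)(K)$, we may assume $Y$ is affine. Let $n = \dim Y$ and apply Noether normalization to obtain a finite surjective $K$-morphism $\pi\colon Y\to \mathbb{A}^n_K$. Because $K$ is algebraically closed and $\pi$ is finite surjective, $\pi$ surjects on $K$-points. For each $i$, the image $\pi(V_i)$ is closed in $\mathbb{A}^n_K$ (as $\pi$ is finite), and of dimension $\leq \dim V_i < n$, hence is a proper closed subset. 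Therefore $\mathbb{A}^n(K) = \pi(Y(K)) \subseteq \bigcup_i \pi(V_i)(K)$, reducing the claim to: $K^n$ is not a countable union of proper closed subvarieties of $\mathbb{A}^n_K$.

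\smallskip

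\emph{Step 3 (the affine space case, induction on $n$).} Each proper closed subvariety of $\mathbb{A}^n_K$ is contained in a hypersurface $V(f)$ for some nonzero $f\in K[x_1,\dots,x_n]$, so it is enough to show that for any countable family of nonzero polynomials $f_i$, we have $\bigcup_i V(f_i)(K)\neq K^n$. For $n=1$, each $V(f_i)(K)$ is finite, a countable union of finite sets is countable, but $K$ is uncountable. For $n\geq 2$, write $f_i = \sum_{j} g_{i,j}(x_1,\dots,x_{n-1})\, x_n^{j}$ and let $g_i$ be the leading coefficient (highest $j$ with $g_{i,j}\neq 0$), a nonzero polynomial in $n-1$ variables. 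By the inductive hypothesis applied to the $g_i$, we can pick $(a_1,\dots,a_{n-1})\in K^{n-1}$ with $g_i(a_1,\dots,a_{n-1})\neq 0$ for all $i$. Then each $f_i(a_1,\dots,a_{n-1},x_n)\in K[x_n]$ is a nonzero univariate polynomial, so has finitely many roots in $K$; since $K$ is uncountable, we can choose $a_n\in K$ avoiding all these roots, and $(a_1,\dots,a_n)$ lies outside every $V(f_i)$.

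\smallskip

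\emph{Main obstacle.} The conceptual content all lives in Step~1 (passing from constructible to closed) and the final avoidance argument in Step~3; the only point that requires slight care is Step~2, namely ensuring that Noether normalization is surjective on $K$-points and that images of proper closed subvarieties remain proper and closed in $\mathbb{A}^n_K$. Once that reduction is granted, the uncountability of $K$ is the essential ingredient and is used exactly at the base of the induction.
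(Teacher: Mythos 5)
Your proof is correct and follows essentially the same route as the paper: pass to a dense affine open, Noether-normalize to $\bA^n_K$, and induct on $n$ using the uncountability of $K$. The only cosmetic differences are that you first replace the constructible sets by their closures (via the dichotomy in Step 1) and then construct a point coordinate by coordinate, whereas the paper keeps the constructible sets and at each stage slices with a single hyperplane not contained in any of their closures.
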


\begin{proof}
This fact is well-known, though we do not know of a reference; here is a sketch of the proof. We may first replace $Y$ by a dense affine open. Then Noether normalization allows us to reduce to the case $Y\simeq \bA^n_K$. We argue by induction on $n$, the case $n=0$ being obvious. Suppose that $n>0$ and that none of the $Z_i$ is dense. Then each of their closures contains at most a finite number of hyperplanes. By uncountability of $K$, there exists a hyperplane $\bA^{n-1}_K\simeq H\subseteq \bA^n_K$ not contained in the closure of any $Z_i$, contradicting the induction assumption for the intersections $H \cap Z_i$ whose $K$-points cover $H(K)$.
\end{proof}

In the curve case, we can factor out powers of Frobenius to assume that $f$ and $g$ are both ``generically \'etale.'' To make this precise, we need to introduce some notation. Consider the situation of \lref{lem:dim Y  equals 1}: $Y=\Spec(B)$ is a connected smooth affine curve over the big field $K$ and we have two nonconstant maps $f, g\colon Y\to X=\bA^1_k$. Let $f_0, g_0\colon k\to K$ be the induced field extensions, see the functoriality of the field of constants in \pref{prop:universal constants}. Focusing on $f$, it corresponds to a map $k[x]\to B$ inducing $f_0$ on $k$ and mapping $x\mapsto f'$. Geometrically, the map $f'\colon Y\to \bA^1_K$ appears in the factorization
\begin{equation} \label{eq:factorization}
  \xymatrix@M+1ex{
    Y \ar[r]^-{f'} \ar[dr]_{f} & X_K = \bA^1_K \ar[d]^{f_0} \\
    & X = \bA^1_k
  }
\end{equation}
where by abuse of notation we also denote the projection $X_K\to X$ of the base change along $f_0 : k \to K$ by $f_0$. Thus $f'$ is not constant, and since $B^\perf=K$ there exists an integer $a\geq 1$ and an element $f'_{\rm new}\in B\setminus B^p$ such that $f' = (f'_{\rm new})^{p^a}$. Since $Y$ is a connected smooth curve, the fact that $f'_{\rm new}$ is not a $p$-th power means that $df'_{\rm new}\neq 0$, so that the induced map $f'_{\rm new} \colon Y\to \bA^1_K$ is generically \'etale. We write $f_{\rm new}\colon Y\to \bA^1_k$ for the  composition with $f_0\colon \bA^1_K \to \bA^1_k$, and find $f = F_{ \bA^1_k}^a\circ f_{\rm new}$. We may therefore for our purposes replace $f$ with $f_{\rm new}$ and hence assume that in addition $f'\colon Y\to\bA^1_K$ is generically \'etale. Doing the same with $g$, we conclude that Proposition~\ref{prop:top-equal} follows from the assertion below.  

\begin{prop}
\label{prop:actually equal if top-equal to A1}
Let $Y$ be a connected affine smooth curve over $k_Y = K$ and let $f, g \colon Y \to \bA^1_k$ be two maps inducing the same map $|Y| \to | \bA^1_k|$ such that the induced maps $f',g'\colon Y\to \bA^1_K$ are generically \'etale. 

  Then, there exist $a,b\geq 0$ such that $F_{\bA^1_k}^a\circ f = F_{\bA^1_k}^b \circ g$.
\end{prop}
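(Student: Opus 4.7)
The plan is to prove the stronger statement $f=g$ (the case $a=b=0$), by analyzing the image of $\varphi=(f',g')\colon Y\to\bA^2_K$ and identifying it with the graph of the induced field isomorphism. Let $\mathcal C\subseteq\bA^2_K$ be the scheme-theoretic image of $\varphi$. Since $Y$ is irreducible and both $f',g'\in B=\rH^0(Y,\dO_Y)$ are non-constant (by generic \'etaleness they lie in $B\setminus B^p$ and in particular outside $K=k_Y$), $\mathcal C$ is an irreducible closed curve. I denote by $\sigma=g_0\circ f_0^{-1}\colon f_0(k)\to g_0(k)$ the field isomorphism induced by the embeddings of fields of constants from \pref{prop:universal constants}, and write $\Sigma=\{(f_0(c),g_0(c)):c\in k\}\subseteq\bA^2_K(K)$ for its graph.

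The first main step is to show $\mathcal C=\overline\Sigma$, using the topological hypothesis at $K$-points. For every $c\in k$ one has $f^{-1}(\{x=c\})=V(f'-f_0(c))$ and $g^{-1}(\{x=c\})=V(g'-g_0(c))$ as subsets of $|Y|$, so $|f|=|g|$ gives $V(f'-f_0(c))=V(g'-g_0(c))$. Because $K$ is algebraically closed, the extension $\bar f'\colon\bar Y\to\bP^1_K$ to the smooth projective model is surjective on $K$-points, so $f'(Y(K))$ omits only finitely many elements of $K$. Using that $k=L_X$ is infinite (this is where the key hypothesis of this proof strategy enters) and $f_0$ is injective, for cofinitely many $c\in k$ one can pick $y\in Y(K)$ with $f'(y)=f_0(c)$; then $f(y)=\{x=c\}=g(y)$ forces $g'(y)=g_0(c)$, so $(f_0(c),g_0(c))=\varphi(y)\in\mathcal C(K)$. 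This places an infinite subset of $\Sigma$ inside the irreducible $1$-dimensional $\mathcal C$, which is therefore Zariski dense in $\mathcal C$, yielding $\overline\Sigma=\mathcal C$.

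The crucial next step exploits that $\Sigma$, being the graph of the ring homomorphism $\sigma$, is stable under componentwise addition, subtraction, and multiplication, and contains $(0,0)$ and $(1,1)$. Since $+$ and $\cdot$ on $\bA^2_K$ are morphisms of $K$-schemes, taking Zariski closures propagates these stabilities to $\mathcal C$, making it an irreducible $1$-dimensional closed subring of $\bA^2_K$ with its componentwise ring structure. The classification of such subrings in characteristic $p$ then gives only three possibilities: the diagonal $\{v=u\}$ and the Frobenius graphs $\{v=u^{p^e}\}$ or $\{u=v^{p^e}\}$ for $e\geq 1$. The latter two are excluded by the generic \'etaleness assumption: $\mathcal C=\{v=u^{p^e}\}$ with $e\geq 1$ would force $g'=(f')^{p^e}\in B^p$, contradicting $g'\notin B^p$, and symmetrically for the reversed case. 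Hence $\mathcal C$ is the diagonal, yielding $f'=g'$ in $B$ (since $Y$ is reduced), while $\sigma=\id$ gives $f_0=g_0$, and therefore $f=g$.

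The main obstacle I anticipate is the classification of irreducible $1$-dimensional closed subrings of $\bA^2_K$ in positive characteristic. The additive part is classical: such a closed connected subgroup of the additive group $\bA^2_K$ is cut out by an $\bF_p$-linear polynomial in the Frobenius powers $u^{p^i},v^{p^j}$. The delicate ingredient is combining this with componentwise multiplicative closure and with $(1,1)\in\mathcal C$, which forces the defining polynomial to take the special form $v-u^{p^e}$ or $u-v^{p^e}$, so that $\mathcal C$ is the graph of a power of absolute Frobenius. An alternative route avoiding the abstract classification is to fix a defining polynomial $F\in K[u,v]$ of $\mathcal C$ and plug in the functional equations $F(f_0(c)+f_0(c'),g_0(c)+g_0(c'))=0$ and $F(f_0(cc'),g_0(cc'))=0$ for all $c,c'\in k$, turning additivity and multiplicativity of $f_0,g_0$ into constraints that iteratively reduce $F$ to the required form.
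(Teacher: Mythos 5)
There is a genuine gap: your argument only covers the case where $k$ is infinite, while the proposition allows $k$ to be an arbitrary field, and the finite case is essential for the main theorem (which applies notably to varieties over finite fields). Your first main step identifies $\mathcal C$ with $\overline\Sigma$ by producing infinitely many points of the graph $\Sigma=\{(f_0(c),g_0(c)):c\in k\}$ on the irreducible curve $\mathcal C$; when $k$ is finite, $\Sigma$ is a finite set and cannot be Zariski dense in $\mathcal C$, so the whole strategy collapses at the outset. There is no obvious repair within your framework: the paper itself observes that this ``field of constants'' strategy needs $L_X$ infinite, and it handles finite $k$ by a completely different mechanism --- after normalizing $f_0=g_0$ by a power of Frobenius (which already shows that the strong conclusion $f=g$ is not the right target there), it bounds $\#S_d$ for $S_d=f^{-1}(k_d)$ from below by $\deg(f)\cdot q^d - B$ using generic \'etaleness, and from above by $\sum_{m\in M_d}\deg\overline Z_m = O(d\,q^{d/2})$ via intersection numbers with graphs of Frobenius powers in $\bP^1_K\times\bP^1_K$, reaching a contradiction as $d\to\infty$. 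You need an argument of this kind (or some other input) for finite $k$; without it the proof is incomplete.

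For infinite $k$ your route is genuinely different from the paper's and, modulo details, viable. The paper shows that $f'$ and $g'$ induce the same \'etale equivalence relation on (a shrinking of) $Y$, obtains an isomorphism $\varphi$ with $g'=\varphi\circ f'$, extends it to $\bP^1_K$, and pins it down to the identity using the multiplicative relation $\lambda^2=\lambda$ coming from $\lambda=g_0(\alpha)/f_0(\alpha)$; you instead classify the image curve $\mathcal C$ of $(f',g')$ as a closed subring of $\bA^2_K$. Both proofs exploit the ring structure of $k$ in the end, but yours shifts the burden onto the classification of one-dimensional integral closed subrings of $\bA^2_K$, which you correctly flag as the main unproven ingredient; it is true (additive closure forces $\mathcal C$ to be cut out by an additive polynomial in $u,v$, and multiplicative closure together with $(1,1)\in\mathcal C$ and dominance of both projections forces the form $v=u^{p^e}$ or $u=v^{p^e}$), but it is real work that your write-up does not supply. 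Two smaller points to tighten: you only place a cofinite subset of $\Sigma$ on $\mathcal C$, so the subring stability of $\mathcal C$ should be deduced from density of that cofinite subset (using irreducibility of $\mathcal C\times\mathcal C$ and of its image under the componentwise operations) rather than from $\overline\Sigma=\mathcal C$ literally; and the conclusion $f_0=g_0$ should be recorded as coming from the diagonal containing $\Sigma$ at cofinitely many, hence all, points.
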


We are going to prove \pref{prop:actually equal if top-equal to A1} in the following two sections depending on the cardinality $\#k$. 

\begin{rmk}
In fact, in view of our reduction to the generically \'etale case, the conclusion of 
\pref{prop:actually equal if top-equal to A1} should be $f$ equals $g$. This is indeed what we prove below when $\#k$ is infinite. However, when $\#k$ is finite, we again trade in \textit{generically \'etale} for another desirable property, namely that $f$ and $g$ agree on the field of constants. We decided to weaken the assertion of 
\pref{prop:actually equal if top-equal to A1} in favour of a more transparent structure of our reduction steps in the proof. 
\end{rmk}

\subsection{Infinite base field} 
\label{subsec:topol infinite base field}

We first assume that $k$ is an infinite field. By \lref{lem:first-reductions}\ref{lemitem:compose with dominant}, we may shrink $Y$ further to an open subscheme so that both maps $f', g' : Y \to \bA^1_K$ are \'etale. 

Let $U \subseteq \bA^1_k$ be the image of $f$ and equally $g$. We verify that $U$ is an open subset of $\bA^1_k$: factoring $f$ into $f'\colon Y\to\bA^1_K$ and $f_0\colon \bA^1_K\to\bA^1_k$, the set $U=f_0(f'(Y))$ is the image of $f'(Y)\subseteq\bA^1_K$, which is the complement of a finite subset. Then $f_0^{-1}(U)$ and $g_0^{-1}(U)$ are open subschemes of $\bA^1_K$ which receive the maps $f'$ and $g'$. 

We claim that for $y_1,y_2 \in Y(K)$ we have $f'(y_1)=f'(y_2)$ if and only if $g'(y_1)=g'(y_2)$. In other words, the \'etale equivalence relation 
\[
  R(f') = Y \times_{f',\bA^1_K, f'} Y \subseteq Y \times_K  Y
\]
agrees with the analogous equivalence relation $R(g')$. Note that  $Y/R(f') \simeq f'(Y)$ canonically induced by $f'$, and similarly for $g'$.

Indeed, it suffices to check this for $y_1$ and $y_2$ belonging to an infinite subset $Z \subseteq Y(K)$. Consider $Z$ to be the set of all $y \in Y(K)$ whose image in $\bA^1_k$ (via $f$ or $g$) is a $k$-rational point. The set $Z$ is infinite by our assumption that $\bA^1(k) = k$ is infinite(!). Now, the preimage of any $k$-rational point under the projections $f_0, g_0: \bA^1_K \to \bA^1_k$ is a singleton. This shows that for $y_1, y_2 \in Z$ we have
\[ 
  f'(y_1)=f'(y_2)   \iff  f(y_1)=f(y_2)  \iff g(y_1)=g(y_2) \iff   g'(y_1)=g'(y_2),
\]  
where the equivalence in the middle makes use of $|f| = |g|$.

Therefore $f'$ and $g'$ define the same \'etale equivalence relation on $Y$, which implies that there exists a $K$-linear isomorphism 
\[
  \ph \colon  f'(Y) \xleftarrow{f'} Y/R(f') = Y/R(g') \xrightarrow{g'} g'(Y).
\]
Then, we have the diagram
\[ 
  \xymatrix{
    & f'(Y) \ar[dr]^{f_0} \ar[dd]_\simeq^\varphi  \\
    Y\ar@{->>}[ur]^{f'} \ar@{->>}[dr]_{g'} & & U \\
    & g'(Y) \ar[ur]_{g_0} 
  }
\]
where the left triangle commutes by construction and the right triangle commutes on the level of topological spaces as a consequence of $|f| = |g|$.

Let $\overline \varphi \colon \bP^1_{K}\to \bP^1_{K}$ be the unique extension of $\varphi$ to smooth compactifications. For $\alpha \in U(k)$ the preimage under $f_0$ (resp.\ $g_0$) is a singleton, more precisely only the point $[f_0(\alpha):1]$ (resp.\ the point $[g_0(\alpha):1]$), hence 
we have 
\begin{equation} \label{eq:constraint on ph}
  \overline \varphi ([f_0(\alpha):1])= [g_0(\alpha):1], \qquad \text{ for } \alpha \in U(k).
\end{equation}
By a change of coordinates defined over $k$ we may assume that $0$ and $1$ belong to $U$. Then \eqref{eq:constraint on ph} yields in particular that $\overline \ph$ fixes $0 = [0:1]$ and $1 = [1:1]$.  After a further change of coordinates by $t \mapsto t/(t-1)$, now $0$ and $\infty = [1:0]$ are preserved. In this coordinate the map $\overline{\varphi}$ must be of the form, for some $\lambda \in K$, 
\[
  \overline{\varphi}(t) = \lambda \cdot t, \qquad t \in K \cup \{\infty\}.
\]
Now \eqref{eq:constraint on ph} implies 
\[
  \lambda = g_0(\alpha)/f_0(\alpha) \qquad \text{ for all } \alpha \in U(k), \alpha \not= 0.
\]
As $U(k)$ contains all but finitely many elements of $k^\times$ and $k$ is an infinite field, we find elements
$\alpha_1,\alpha_2 \in U(k)$ with $\alpha_1 \alpha_2 \in U(k)$. Then we conclude $\lambda = 1$ from 
\[
  \lambda^2 = \frac{g_0(\alpha_1)}{f_0(\alpha_1)} \cdot \frac{g_0(\alpha_2)}{f_0(\alpha_2)} 
  = \frac{g_0(\alpha_1\alpha_2)}{f_0(\alpha_1\alpha_2)} = \lambda.
\]
Thus $\overline \varphi$ is the identity $g' = \ph \circ f'$ equals $f'$. Moreover, also $f_0 = g_0$ holds for the cofinite set $U(k) \cap k^\times$, and thus for all of $k$. This completes the proof of \pref{prop:actually equal if top-equal to A1} if $\#k$ is infinite.  

\subsection{Finite base field: intersection theory} 
\label{subsec:topol finite base field}

We will now deal with the case $X = \bA_k^1$ with $k=k_X$ a finite field of cardinality $q$. We use $f_0$ to identify $k$ with a subfield of $K = k_Y$. Then, as $K$ only contains a unique subfield of cardinality $q$, we have $g_0(k) = k$, and so there is some $b \geq 0$ such that $f_0 = g_0 \circ F^b$. We may replace $g$ by the composition $F^b \circ g$, and thus assume that $f_0 = g_0$ at the expense of giving up that $g'$ is actually generically \'etale ($f'$ still is). We thus have achieved that in the factorization of $f$ and $g$ as in \eqref{eq:factorization} both $K$-varieties denoted $X_K$ and the projections $X_K \to X$ agree. Since we will be mostly working with the $K$-linear maps now, we rename $\pr := f_0 = g_0$ and denote $f'$ by $f$ (resp.\ $g'$ by $g$). So we are left to prove the following.

\begin{prop} \label{prop:top-equal to A1 finite field target}
Let $k$ be a finite field contained in an algebraically closed field $K$. Let $Y$ be a connected affine smooth curve of finite type over $K$ and let $f, g \colon Y \to \bA^1_K$ be two maps of $K$-schemes inducing the same map $|Y| \to |\bA^1_k|$ after composition with the projection $\pr \colon \bA^1_K \to \bA^1_k=X$, and with image in $|\bA^1_k|$  of positive dimension. We further assume that $f$ is generically \'etale.
  
  Then, there exist $a,b\geq 0$ such that $F_X^a\circ f = F_X^b \circ g$.
\end{prop}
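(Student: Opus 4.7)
The plan is to encode the conclusion as the identical vanishing of one of the rational functions
\[
\phi_n := v - u^{q^n}, \qquad \psi_n := u - v^{q^n} \qquad (n \geq 0),
\]
where $u := f^\ast t$ and $v := g^\ast t$, and derive a contradiction by counting points of $Y$ where $u$ takes values in $\bF_{q^m}$. The reduction is that $\phi_n \equiv 0$ means $g = F_{\bA^1_K}^{rn} \circ f$ (writing $q = p^r$), giving the conclusion with $(a,b) = (rn,0)$, and $\psi_n \equiv 0$ gives it with $(a,b) = (0,rn)$. So suppose for contradiction that all $\phi_n, \psi_n$ are non-zero in $\dO(Y)$. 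Extending $f, g$ to $\bar f, \bar g \colon \bar Y \to \bP^1_K$ on the smooth projective completion of $Y$, with $\bar f$ separable of degree $d_f := \deg \bar f$ and $d_g := \deg \bar g$, the pole divisors on $\bar Y$ yield the bounds
\[
|\{\phi_n = 0\}| \leq d_g + q^n d_f, \qquad |\{\psi_n = 0\}| \leq d_f + q^n d_g.
\]

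The hypothesis enters via the following observation. For any $y \in Y(K)$ with $u(y) \in \bF_{q^m}$, the topological equality $\pr(u(y)) = \pr(v(y))$ forces $v(y)$ to be a Galois conjugate of $u(y)$ over $k$, so $v(y) \in \bF_{q^m}$ and $v(y) = u(y)^{q^{n(y)}}$ for some $n(y) \in \{0, \ldots, m-1\}$. Since $u(y)^{q^m} = u(y)$ in $\bF_{q^m}$, this relation is equivalent to $u(y) = v(y)^{q^{m - n(y)}}$. Consequently $y$ always lies in $\{\phi_n = 0\} \cup \{\psi_n = 0\}$ for some index $n \leq \lceil m/2 \rceil$, giving
\[
u^{-1}(\bF_{q^m}) \subseteq \bigcup_{n=0}^{\lceil m/2 \rceil} \bigl(\{\phi_n = 0\} \cup \{\psi_n = 0\}\bigr).
\]
Summing the displayed bounds yields the upper estimate $|u^{-1}(\bF_{q^m})| = O(q^{m/2})$, with implicit constant depending only on $d_f, d_g, q$.

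For the matching lower bound, separability of $\bar f$ implies that outside the finite union of the branch locus and $\bar f(\bar Y \setminus Y)$ each fiber of $\bar f$ consists of $d_f$ distinct $K$-points of $Y$; applied to $\alpha \in \bF_{q^m}$, this gives $|u^{-1}(\bF_{q^m})| \geq d_f q^m - O(1)$. For $m$ large enough, the upper and lower bounds contradict each other, so some $\phi_n$ or $\psi_n$ vanishes identically, as required.

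The decisive step---and what I expect to be the main obstacle---is the \emph{splitting} between the $\phi_n$'s and $\psi_n$'s in the second paragraph. Using only the $\phi_n$'s for $n = 0, \ldots, m-1$ yields the weaker upper bound $O(q^m)$, which merely matches the lower bound $d_f q^m$ and fails to produce a contradiction, most visibly in the delicate case $q = 2$. Rewriting the relations with $n(y) > m/2$ as small-index $\psi$-relations exploits the cyclic structure of Galois orbits in $\bF_{q^m}$ and halves the effective summation range, giving the needed gap between the two bounds.
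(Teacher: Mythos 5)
Your proof is correct and follows essentially the same strategy as the paper's: both arguments compare the fibre $u^{-1}(\bF_{q^m})$ against the union of the Frobenius-twisted equalizers with exponent at most $m/2$ (your $\phi_n$, $\psi_n$ are exactly the paper's intersections with the graphs $\Gamma_{\pm n}$ in $\bP^1_K\times\bP^1_K$), use the same degree bounds $d_g+q^nd_f$ and $d_f+q^nd_g$, and derive the same contradiction from the lower bound $d_fq^m-O(1)$ supplied by generic \'etaleness of $f$. The "decisive splitting" you highlight is precisely the paper's choice of the representatives $M_d=\{m : -d/2<m\leq d/2\}$ of $\bZ/d\bZ$, so the two write-ups differ only in presentation.
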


\begin{proof}
Let $\eta_Y$ (resp.\ $\eta_{X_K}$ and $\eta_X$) denote the generic point of $Y$ (resp.\ $X_K$ and $X$). Let $\bar k$ denote the algebraic closure of $k$ inside $K$. Then the maps of topological spaces to be considered factor as
\[
|Y| =   \{\eta_Y\} \cup Y(K)  \xrightarrow{|f|, |g|}  |X_K| =  \{\eta_{X_K}\} \cup K \xrightarrow{|\pr|} |X| = \{\eta_{X}\} \cup \bar k/\Gal(\bar k/k)
\]
where by $\bar k /\Gal(\bar k/k)$ we denote the set of Galois orbits of $\Gal(\bar k/k)$ acting on $\bar k$. The fibre of $\pr$  in a Galois orbit agrees with precisely this orbit. 

For every $d \geq 1$ there is a unique subfield $k_d \subseteq K$ of order $q^d$. The set $G_d := \pr(k_d)$ in $|X|$ consists of precisely those Galois orbits of length dividing $d$.  Since $k_d = \pr^{-1}(G_d)$ and since $|\pr \circ f|$ agrees with $ |\pr \circ g|$,  the preimages with respect to $f,g$
\[
  f^{-1}(k_d) = |f \circ \pr|^{-1}(G_d)  = |g \circ \pr|^{-1}(G_d)  = g^{-1}(k_d) 
\]
agree and will be denoted by $S_d$. Let 
\[
  M_d =  \left\{ m \in \bZ  \ ; \ -\frac{d}{2} < m \leq \frac{d}{2}\right\}
\]
be a minimal set of representatives of $\bZ/d\bZ$ in terms of absolute value. It follows that for every $y \in S_d$ there is an $m \in M_d$ with 
\begin{equation} \label{eq:pointwise Frob relation}
  \begin{cases}
    \ \ \quad f(y) = g(y)^{q^m} & \text{ if $m \geq 0$} ,\\
    f(y)^{q^{-m}} = g(y) \ \quad & \text{ if $m \leq 0$} ,\\
  \end{cases}
\end{equation}
because $\Gal(k_d/k)$ is generated by the $q$-power Frobenius. 

For $m \geq 0$, let $\Gamma_m \subseteq \bP^1_K \times \bP^1_K$ denote the graph of Frobenius $F^m : \bP^1_K \to \bP^1_K$, given by $[u:v] \mapsto [u^{q^m}: v^{q^m}]$. For $m<0$, we denote by $\Gamma_m$ the image of $\Gamma_{-m}$ under the transposition of factors of $\bP^1_K \times \bP^1_K$.  

Let $\overline Y$ denote the smooth projective completion of $Y$, and let $\bar f$ and $\bar g$ denote the extensions of $f$ and $g$ to maps $\bar f, \bar g \colon \overline Y \to \bP^1_K$.  Define $\bar Z_m$ as the fibre product
\[
  \xymatrix@M+1ex@R-1ex@C-1ex{
    \overline Z_m \ar[d] \ar@{^(->}[r] & \overline Y \ar[d]^{h = (\bar f, \bar g)} \\
    \Gamma_m \ar@{^(->}[r] & \bP^1_K \times \bP^1_K. }
\]
If $\overline Z_m = \overline Y$, then $f = F^m \circ g$ for $m \geq 0$ or  $F^{-m} \circ f = g$ for $m<0$, so \pref{prop:top-equal to A1 finite field target} is proven. We argue by contradiction and assume that for all $m \in \bZ$ the subscheme $\overline Z_m$ has dimension $0$.  Its degree can be computed as
\[
  \deg(\overline Z_m) = \begin{cases}
    \deg(f) + q^m \deg(g) & \text{ if $m \geq 0$} ,\\
    q^{-m} \deg(f) + \deg(g) & \text{ if $m \leq 0$} .\\
  \end{cases}
\]
By \eqref{eq:pointwise Frob relation} we have
\[
  S_d \subseteq  \bigcup_{m \in M_d} \bar{Z}_m(K).
\]
Now, since $f$ is generically \'etale, there is a fixed $B$, uniform in $d$, taking into account the ramification and boundary of $f\colon Y \to \bA^1_K$ in comparison with $\bar f\colon \overline Y \to \bP^1_K$, such that 
\[
  \deg(f) \cdot q^d - B \leq \#S_d.
\]
Combining the above we obtain the inequality
\[
  \deg(f) \cdot q^d - B \leq \#S_d \leq \sum_{m \in M_d} \deg \overline Z_m
  \leq d \cdot q^{d/2} \big(\deg(f) + \deg(g)\big) .
\]
Letting $d$ tend to infinity leads to a contradiction, and that concludes the proof of \pref{prop:top-equal to A1 finite field target} and thus also, finally, the proof of \pref{prop:top-equal}.
\end{proof}

\section{Artin--Schreier theory}
\label{sec:artin-schreier}

The goal of this section is to prove the implication \ref{thmitem:H1}$\Rightarrow$\ref{thmitem:top} of Theorem~\ref{thm:main}, thus completing its proof. We first deal with the case of a ``formal punctured disc''. 

\subsection{Local curve case}

Temporarily, let $K$ be a perfect field of characteristic $p>0$. The field $K(\!(t)\!)$ of formal Laurent series is endowed with the discrete valuation $v\colon K(\!(t)\!)\to \mathbf{Z}\cup\{\infty\}$ normalized so that $v(t)=1$. We write $d\colon K(\!(t)\!)\to \Omega$ for the universal continuous derivation, so that $\Omega=K(\!(t)\!)dt$. We extend the valuation $v$ to differentials by setting $v(dt) = 1$. We then have $v(z)\leq v(dz)$ for all $z$. 

\begin{prop} 
\label{prop:curve-germ-case}
  Let $f, g \in K(\!(t)\!)$ with $v(f) < 0$  and $g \notin K$, and suppose that for every $n\geq 1$ 
  there exists an $h_n \in K(\!(t)\!)$ such that 
  \[ 
    f^n - g^n = h_n - h_n^p. 
  \]
  Then there exist integers $a,b\geq 0$ such that $f^{p^a}=g^{p^b}$.
\end{prop}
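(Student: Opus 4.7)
The plan is to encode the hypothesis via a canonical form for classes in $L/\wp(L)$ (where $L = K(\!(t)\!)$ and $\wp(h) := h^p - h$) and then argue by induction on a suitable pole invariant. Using that $K$ is perfect, each class in $L/\wp(L)$ admits a unique representative of the shape $\sum_{m \geq 1,\, p \nmid m} c_m\, t^{-m} + \bar c_0$ with $c_m \in K$ and $\bar c_0 \in K/\wp(K)$, produced by iterating the congruence $a\, t^{-pm} \equiv a^{1/p}\, t^{-m} \pmod{\wp(L)}$ and killing the non-negative-degree part. This yields $\bF_p$-linear functionals $c_m \colon L \to K$, well-defined modulo $\wp(L)$, given by the explicit formula
\[
  c_m(h) \ = \ \sum_{k \geq 0} a_{-p^k m}(h)^{1/p^k},
\]
where $a_i(h)$ denotes the coefficient of $t^i$ in $h$. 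The hypothesis then reads $c_m(f^n) = c_m(g^n)$ for all $m, n \geq 1$ with $p \nmid m$.

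Two easy reductions sharpen the setup. Since $(f^n - g^n)^p = f^{pn} - g^{pn}$ in characteristic $p$ and $\wp(L)$ is preserved by $p$-th powers (because $k^p = k + \wp(k)$), the hypothesis at $n$ already implies it at $pn$, so we may restrict to $n$ coprime to $p$. Moreover, if $f = u^p \in L^p$ then $f^n = u^{pn}$ and $u^{pn} - u^n = \wp(u^n)$, so the pair $(u, g)$ also satisfies the hypothesis; by symmetry and iteration, we may assume $f, g \notin L^p$. Under these extra assumptions the conclusion $f^{p^a} = g^{p^b}$ forces $a = b$ (otherwise exactly one of the two sides would be a $p$-th power), and then $(f - g)^{p^a} = 0$ collapses the claim to $f = g$.

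The next task is a valuation and leading-term analysis. Since $v(\wp(h))$ is either non-negative or a negative multiple of $p$, combining the hypothesis for varying $n$ coprime to $p$ with $v(f) < 0$ and $g \notin K$ is used to force $v(g) = v(f) = -e$ for some $e \geq 1$. Writing $e = p^a e'$ with $p \nmid e'$, the leading Kato coefficient comparison $c_{e'}(f^n) = c_{e'}(g^n)$ reads, to top order, $(f_{-e}^{1/p^a})^n = (g_{-e}^{1/p^a})^n$ for all $n$ coprime to $p$, whence $f_{-e} = g_{-e}$. Inducting on the invariant $N(h) := \max\{m : p \nmid m,\ c_m(h) \neq 0\}$ applied to $h = f - g$ then propagates the equality to all subsequent coefficients and gives $f = g$.

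The principal obstacle I anticipate is that $\wp$ is not multiplicative, so the strong condition $f^n - g^n \in \wp(L)$ for all $n$ is not a formal consequence of the single relation $f - g \in \wp(L)$. The identity $\wp(ab) = \wp(a)\,b^p + a\,\wp(b)$, together with the recursion $h_{p^k} \equiv h_1^{p^k} \pmod{\bF_p}$ (derived from $(f-g)^{p^k} = \wp(h_1)^{p^k} = \wp(h_1^{p^k})$, which forces $h_{p^k} - h_1^{p^k} \in \ker \wp = \bF_p$), should provide the technical leverage to propagate coefficient information across different values of $n$ and close the induction.
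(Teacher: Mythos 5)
Your setup via canonical Artin--Schreier representatives is correct (the functionals $c_m(h)=\sum_{k\ge 0}a_{-p^km}(h)^{1/p^k}$ do classify $L/\wp(L)$ for $L=K(\!(t)\!)$, $K$ perfect), and your preliminary reductions to $n$ coprime to $p$ and to $f,g\notin L^p$ match the paper's. But the proof has a genuine gap: the entire difficulty of the proposition is concentrated in the step you describe as ``inducting on $N(f-g)$ \dots propagates the equality to all subsequent coefficients,'' and that step is not carried out; your closing paragraph essentially concedes this. Concretely, suppose $f-g=\delta$ with $v(\delta)=s>v(f)=-e$. The leading term of $f^n-g^n$ is $ng^{n-1}\delta$, sitting in degree $-(n-1)e+s$. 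To conclude $\delta_s=0$ from $c_m(f^n-g^n)=0$ you need the functional $c_m$ attached to that degree to actually see the coefficient $\delta_s$ in isolation; but $c_m$ mixes the coefficients at all degrees $-p^jm$, and whenever $(n-1)e-s$ is divisible by $p$ the leading contribution enters with a root $(\,\cdot\,)^{1/p^k}$, $k\ge 1$, contaminated by higher-degree coefficients of $f^n-g^n$ that you do not control. Varying $n$ modulo $p$ does not always escape this: for $p=2$ the quantity $(n-1)e$ is forced to be even for all admissible $n$, and reducing to $f\notin L^p$ does not give $p\nmid e$. The same contamination undermines your ``leading Kato coefficient comparison'': as written it compares $c_{e'}$ where it should compare $c_{ne'}$, and the asserted identity $(f_{-e}^{1/p^a})^n=(g_{-e}^{1/p^a})^n$ only holds ``to top order'' when $a=0$. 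Likewise, $v(f)=v(g)$ is asserted but not proved; the valuation observation you cite only yields $p\mid v(f)$ in the unequal case, not a contradiction.

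For comparison, the paper avoids coefficient bookkeeping entirely: it applies the continuous derivation $d$ to $f^n-g^n=h_n-h_n^p$, so that $d(h_n^p)=0$ kills exactly the problematic $p$-th--power part, and the inequality $v(dh_n)\ge v(h_n)=\tfrac1p v(f^n-g^n)$ forces $v\bigl(df-\varepsilon^{n-1}dg\bigr)$ (with $\varepsilon=g/f$) to grow linearly in $n$; a short case analysis on $\varepsilon$ then shows this quantity is in fact bounded, a contradiction unless $\varepsilon=1$. If you want to salvage your approach, you would need a substitute for this derivative trick that cleanly separates the leading coefficient of $f^n-g^n$ from the $p$-power tail of the functionals $c_m$ uniformly in $n$; as it stands the proposal does not contain one.
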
 

\begin{proof}
Since $K$ is perfect, an element $z$ of $K(\!(t)\!)$ is a $p$-th power if and only if $dz = 0$. Repeatedly taking $p$-th roots of $f$ or $g$, we may therefore assume that $df$ and $dg$ are nonzero. Note that we keep the assumption of the lemma by taking $p$-th roots because if for example  $f = (f_1)^p$, then 
\[
  f_1^n - g^n = f_1^n - (f_1^n)^p + f^n - g^n = (f_1^n + h_n) - (f_1^n + h_n)^p.
\]
After these preparations, our goal is now to show that $f=g$.  Possibly exchanging $f$ and $g$, we may assume that $v(f)\leq v(g)$. Set $\varepsilon = g/f$, so that $v(\varepsilon)\geq 0$. We need to show that $\varepsilon =1$, so suppose otherwise.

The basic idea is to differentiate both sides of 
\[
  f^n(1-\varepsilon^n) = f^n - g^n = h_n - h_n^p
\]
and look at valuations, making use of the fact that $d (h_n^p)$ disappears. This forces the valuation of $d(f^n - g^n)$ to be much less negative than the valuation of $f^n-g^n$ forcing a number (that grows linearly with $n$) of certain  initial coefficients of $f^n$ and $g^n$ to agree. 

The actual argument below does not refer to coefficients.  Rather, we need to consider $n$ coprime to $p$ and such that $\varepsilon^n\neq 1$. All values of $n$ from now on will be assumed to satisfy these assumptions. Since we assumed $\varepsilon\neq 1$, the set of such $n$ is infinite, and it makes sense to talk about asymptotics as $n\gg 0$. We use the notation $O(1)$ to denote a bounded function in $n$.

\begin{lemma}
  Consider values of $n\geq 0$ which are coprime to $p$ and such that $\varepsilon^n \neq 1$. Then:
  \begin{enumerate}[(a)]
    \item $v(1-\varepsilon^n)=O(1)$,
    \item $v(f^n - g^n) = nv(f) + O(1)$,
    \item $v(f^n - g^n)< 0$ for $n\gg 0$,
    \item $v(f^n - g^n) = pv(h_n)$ for $n\gg 0$,
    \item $v(d(f^n - g^n)) \geq v(h_n) $.
  \end{enumerate}
\end{lemma}

\begin{proof}
Assertion (a) follows from 
\[
  1-\varepsilon^n = \prod_{\xi^n = 1} (1 -\xi \varepsilon).
\]
and $(1 -\xi \varepsilon)(0) = 1-\xi\varepsilon(0) \not= 0$ unless $\xi = \varepsilon(0)^{-1}$. So in fact $v(1-\varepsilon^n)$ takes at most two values. Part (b) follows since $f^n - g^n = f^n(1-\varepsilon^n)$, and part (c) holds since $v(f)<0$, so that $nv(f) + O(1) < 0$ for $n\gg 0$. 

For (d), note that (c) implies that $v(h_n-h_n^p)<0$ for $n\gg 0$. But then $v(h_n)$ is negative and $v(h_n - h_n^p) = pv(h_n)$ by the triangle inequality. For (e), we write 
\[ 
  d(f^n - g^n) = d(h_n - h_n^p) = d h_n, 
\]
and $v(dh) \geq v(h)$ holds for any $h$. 
\end{proof}

Now, we combine parts (d) and (e) of the lemma to obtain the inequality:
\begin{equation} \label{eqn:first-ineq}
  v\left( d(f^n - g^n)\right) \geq v(h_n) = \frac{1}{p}v(f^n - g^n)  = \frac{n}{p}v(f) + O(1).
\end{equation}
On the other hand, we have
\[ 
 d(f^n - g^n) = nf^{n-1} x_n, 
  \qquad 
  x_n := (1-\varepsilon^n)df - f\varepsilon^{n-1}d\varepsilon
\]
and
\begin{equation} \label{eqn:second-ineq}
  v\left(d(f^n - g^n)\right) = (n-1) v(f) + v(x_n) = nv(f) + v(x_n) + O(1).
\end{equation}
Combining \eqref{eqn:first-ineq} and \eqref{eqn:second-ineq} we obtain
\begin{equation} \label{eqn:third-ineq}
  v(x_n) \geq cn + O(1), \qquad c = -\left(1-\frac 1 p\right)v(f) > 0.
\end{equation}

In the rest of the proof, we shall estimate $v(x_n)$ from above for certain values of $n$, obtaining a contradiction with \eqref{eqn:third-ineq}, showing that $\varepsilon = 1$. We first note that 
\[ 
  x_n = \frac{ d(f^n - g^n)}{nf^{n-1}} = df - \varepsilon^{n-1} dg.
\]

\medskip

\noindent \emph{Case 1.} Suppose that $v(\ep)>0$. Then for $n \gg 0$ we have 
\[
  v(x_n) =  v(df  - \varepsilon^{n-1} dg) = v(df) = O(1),
\]
contradicting \eqref{eqn:third-ineq}. 

\medskip

\noindent \emph{Case 2.} Suppose that $\ep$ is a root of unity. Then $x_n$ takes only finitely many values and so $v(x_n) = O(1)$ is bounded, again contradicting \eqref{eqn:third-ineq}.

\medskip

\noindent \emph{Case 3.} Suppose that $v(\varepsilon) = 0$, but $\ep$ is not a root of unity. Take $n$ prime to $p$ and calculate
\[ 
  v(x_{n+p}-x_n) = v\left((\varepsilon^{n-1} - \varepsilon^{n+p-1})dg\right) = (n-1) v(\varepsilon) + v(1- \varepsilon^{p})+ v(dg) = O(1).
\]
On the other hand, \eqref{eqn:third-ineq} implies
\[  
    v(x_{n+p}-x_n)  \geq \min \{ v(x_{n+p}), v(x_n)\} \geq cn + O(1),
\]
and we obtain a contradiction again.
\end{proof}

\subsection{Concluding the proof} 
\label{ss:c_implies_d}

The goal of this section is to prove the implication \ref{thmitem:H1} $\Rightarrow$ \ref{thmitem:top} of Theorem~\ref{thm:main}, concluding its proof. 

\begin{prop} \label{prop:equal on cohomology}
Let $Y$ be an integral $\bF_p$-scheme of finite type over a field, and let $X$ be a quasi-affine $\bF_p$-scheme of finite type over a field.
  Let $f, g \colon Y \to X$ be a pair of maps whose image has positive dimension. Suppose that the two maps
  \[ 
    f^*, g^*\colon \rH^1(X, \bF_p)\to \rH^1(Y, \bF_p)
  \]
  are equal. Then $f$ and $g$ induce the same map $|f| = |g| : |Y| \to |X|$ on the underlying sets. 
\end{prop}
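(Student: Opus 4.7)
The strategy is to argue by contradiction and reduce to \pref{prop:curve-germ-case}. Suppose there exists a point $y \in Y$ with $f(y) \neq g(y)$. Since $X$ is quasi-affine of finite type, it admits a locally closed immersion $X \inj \bA^N_{k'}$ for some $k'$, and the coordinate functions lie in $\dO_X(X)$ and separate points of $|X|$. Combining this with the assumption that $f(Y)$ and $g(Y)$ both have positive dimension and that $f(y) \neq g(y)$, I select a polynomial $h \in k'[x_1,\ldots,x_N]$ such that the pullbacks $\phi := h \circ f$ and $\psi := h \circ g$ are \emph{both} non-constant on $Y$ and satisfy $\phi(y) \neq \psi(y)$. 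Each of these three conditions fails only on a proper subvariety of the space of polynomials of bounded degree, so a suitable $h$ exists; generically a linear combination of coordinates works, although over small finite base fields one may need polynomials of higher degree.

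For every $n \geq 1$ the function $h^n \in \dO_X(X)$ defines a class $[h^n] \in \rH^1_\et(X, \bF_p)$ via the natural injection $\dO_X(X)/\wp(\dO_X(X)) \inj \rH^1_\et(X, \bF_p)$ coming from the Artin--Schreier sequence $0 \to \bF_p \to \dO \xrightarrow{\wp} \dO \to 0$. The hypothesis $f^\ast = g^\ast$ on $\rH^1_\et(X, \bF_p)$ together with functoriality yields $[\phi^n] = [\psi^n]$ in $\rH^1_\et(Y, \bF_p)$. Restricting to an affine open $U \subseteq Y$ containing $y$, on which $\rH^1_\et(U, \bF_p) = \dO_Y(U)/\wp(\dO_Y(U))$ by vanishing of coherent cohomology, I conclude
\[
  \phi^n - \psi^n \in \wp(\dO_Y(U)) \qquad \text{for every } n \geq 1.
\]

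To bring \pref{prop:curve-germ-case} to bear, I pass to a formal disc. Pick an integral curve $C \subseteq U$ through $y$ on which $\phi|_C$ and $\psi|_C$ remain non-constant and $\phi|_C(y) \neq \psi|_C(y)$; such curves exist abundantly by \cite{mumford:AV}*{Lemma, p.~56}. After normalization, let $\overline C$ denote the smooth projective completion, and choose $y_\infty \in \overline C$ where $\phi|_C$ has a pole, which exists because $\phi|_C$ is non-constant. Setting $K := k(y_\infty)^\perf$, the completion at $y_\infty$ followed by perfection gives an embedding $k(\overline C) \inj K(\!(t)\!)$. In $K(\!(t)\!)$ we then have $v(\phi|_C) < 0$ by construction and $\psi|_C \notin K$ (since $\psi|_C$ is a non-constant rational function on the irreducible curve $\overline C$, hence its Laurent expansion at $y_\infty$ is non-constant), while the Artin--Schreier relations from the previous step persist. \pref{prop:curve-germ-case} now yields integers $a,b \geq 0$ with $(\phi|_C)^{p^a} = (\psi|_C)^{p^b}$ in $K(\!(t)\!)$, and hence in $k(\overline C)$ by injectivity of the embedding.

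Taking $p$-power roots in the integral domain $k(\overline C)$ shows that $\phi|_C$ and $\psi|_C$ differ at most by a power of the absolute Frobenius of $\bA^1$, which acts as the identity on the underlying topological space. Thus $\phi|_C$ and $\psi|_C$ induce the same map $|C| \to |\bA^1|$, contradicting our choice $\phi(y) \neq \psi(y)$. The \textbf{main obstacle} is the careful initial choice: finding $h$ so that both $\phi$ and $\psi$ are non-constant --- which is precisely where the positive-dimension hypothesis on \emph{both} images enters --- and then picking a curve $C$ on which these restrictions remain non-constant and $\phi$ picks up a pole at some boundary point of the smooth completion.
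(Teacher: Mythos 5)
Your overall strategy is the same as the paper's: reduce to a curve through the offending point via \cite{mumford:AV}*{Lemma, p.~56}, reduce the target to $\bA^1$, extract the relations $\phi^n-\psi^n\in\wp(\dO)$ for all $n\geq 1$ from the Artin--Schreier classes of $h^n$, complete at a pole of $\phi$, and invoke \pref{prop:curve-germ-case}. The one place where you genuinely depart from the paper is the very first step, and that is where the gap lies. You argue by contradiction at a single point $y$ and need a \emph{single} function $h$ that simultaneously (i) separates the two points $\mathfrak p=f(y)\neq g(y)=\mathfrak q$ of $|X|$ and (ii), (iii) has non-constant pullback under both $f$ and $g$. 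Conditions (ii) and (iii) each fail exactly on a proper $\bF_p$-subspace of $\dO_X(X)$, but condition (i) is not the complement of a subvariety in any evident sense: the locus of $h$ with $h(\mathfrak p)=h(\mathfrak q)$ is governed by conjugacy of residues over $\bF_p$, not by a linear or algebraic condition on coefficients. The functions satisfying (i) do include $(\mathfrak p\setminus\mathfrak q)\cup(\mathfrak q\setminus\mathfrak p)$, but then the natural attempt is to show that, say, $\mathfrak p\not\subseteq(\mathfrak p\cap\mathfrak q)\cup (f^*)^{-1}(K)\cup (g^*)^{-1}(K)$ --- a union of \emph{three} proper subspaces, and over $\bF_2$ three proper subspaces can exhaust a vector space; moreover degenerate configurations (e.g.\ $\mathfrak p\cap\mathfrak q\subseteq\ker f^*$, which occurs when $f(y)$ is the generic point of the image) must be excluded by hand. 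So ``each condition fails on a proper subvariety, hence a generic $h$ works'' is an assertion, not a proof. The paper's device avoids this entirely: it never separates two points with one function, but shows $f^*(t)=g^*(t)$ on the curve for \emph{every} $t$ in $S=A\setminus\bigl((f^*)^{-1}(B^p)\cup (g^*)^{-1}(B^p)\bigr)$; being the complement of a union of only \emph{two} proper subspaces, $S$ generates $A$, whence $f=g$ as morphisms of the curve --- stronger than $|f|=|g|$ and obtained at no extra cost.

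Two smaller points. First, your contradiction point $y$ must be closed for \cite{mumford:AV}*{Lemma, p.~56} to apply; you need the paper's preliminary observation that $|f|$ and $|g|$ are determined by their restrictions to closed points. Second, $K:=k(y_\infty)^\perf$ is the wrong field: with the paper's convention $(-)^\perf$ is the \emph{inverse} perfection, a subfield of $k(y_\infty)$, and $k(y_\infty)(\!(t)\!)$ does not embed into $k(y_\infty)^\perf(\!(t)\!)$. You want the perfect closure $k(y_\infty)^{1/p^\infty}$, or, cleaner, the paper's initial reduction (base change to an algebraically closed field and normalize), which makes all residue fields perfect from the start. These two are easily repaired; the choice of $h$ is the step that needs a genuine argument.
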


\begin{proof}
If $h \colon Y' \to Y$ is a surjective map of integral $\bF_p$-schemes respectively of finite type over a field, then we may replace $f$ and $g$ by $fh$ and $gh$. We apply this reduction to the normalization of a suitably chosen irreducible component of the base change of $Y$ to an algebraically closed field $K$.

Next, we choose an embedding $i: X \inj X'$ into an affine variety $X'$. Because the map $|i|$ is a monomorphism $|X| \inj |X'|$, we may replace $X$ by $X'$, compose $f$ and $g$ by $i$, and thus assume that $X$ is affine from the start.

The map $|Y| \to |X|$ associated to $f$ (resp.\ $g$) is determined by the restriction to closed points $Y(K) = |Y|_0$ of $|Y|$. Indeed, for any $y \in |Y|$ let $Z_y \subseteq Y$ denote the Zariski closure of $y$ in $Y$. Then $f(y)$ is the generic point of the Zariski closure of $f(|Z_y| \cap |Y|_0)$ in $|X|$.

Because $f(Y)$ and $g(Y)$ have positive dimension, both $f$ and $g$ are not constant when restricted to closed points $|Y|_0$.

For every closed point $y \in Y$ pick an affine irreducible curve $C_y \subseteq Y$ passing through $y$ and points $z_1,z_2$ depending on $y$ with $f(y) \not= f(z_1)$ and $g(y) \not= g(z_2)$, see \cite{mumford:AV}*{Lemma, p.~56}\footnote{Note that the proof of \cite{mumford:AV}*{Lemma, p.~56} only constructs a curve passing through two points. But the proof immediately generalizes to any finite set of closed points.}.

Thus $f$ and $g$ restricted to $C_y$ are nonconstant maps $f_y := f|_{C_y}$ and $g_y := g|_{C_y}$. If we assume \pref{prop:equal on cohomology} in the case of $\dim(Y)=1$, then the restrictions 
$|f_y| = |g_y|$ agree. As the union of these curves $C_y$ with $y$ ranging over all closed points of $Y$ covers $|Y|_0$, the two maps $|f|$ and $|g|$ agree on all of $|Y|$ and the proof is achieved. This reduces the proof to the case $\dim(Y) = 1$ with $Y$ affine. By replacing $Y$ by its normalization, we may assume that $Y$ is smooth because normal curves over $K$ are smooth, $K$ being algebraically closed.

Let $Y = \Spec(B)$ and $X = \Spec(A)$, and the maps $f$ and $g$ are given by homomorphisms $f,g : A \to B$. By assumption, neither $f$ nor $g$ is constant, hence their image is not contained in $K = \bigcap_{n\geq 0} B^{p^n}$. Let $n$ be maximal with $f(A) \subseteq B^{p^n}$, in other words we can factor $f = F_X^n \circ f'$ with $f'(A) \not\subseteq B^p$. Since $F_X$ acts as identity on $|X|$, we may replace $f$ by $f'$ and similarly for $g$ in order to reduce to the case where $f(A)$ and $g(A)$ are not contained in $B^p$.  The result then follows from the more precise proposition below.
\end{proof}

\begin{prop} \label{prop:equal on cohomology precise version}
Let $Y = \Spec(B)$ be an affine smooth connected curve over the algebraically closed field $K$ of characteristic $p > 0$. Let $X=\Spec(A)$ be an affine $\bF_p$-scheme of finite type over a field. 
Let $f, g \colon A \to B$ be a pair of maps such that their respective images as maps $f,g \colon Y \to X$  have positive dimension, and such that $f(A)$ and $g(A)$ are not contained in $B^p$. Suppose that the two maps
  \[ 
    f^*, g^*\colon \rH^1(X, \bF_p)\to \rH^1(Y, \bF_p)
  \]
  are equal. Then $f$ equals $g$.
\end{prop}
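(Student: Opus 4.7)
The plan is to translate the cohomological hypothesis via Artin--Schreier theory and then reduce, element by element, to the local situation resolved in \pref{prop:curve-germ-case}. Since $X$ and $Y$ are affine, the Artin--Schreier sequence identifies $\rH^1(X,\bF_p)=A/\wp(A)$ and $\rH^1(Y,\bF_p)=B/\wp(B)$, where $\wp(b)=b^p-b$. The assumption $f^*=g^*$ on $\rH^1$ thus reads: for every $a\in A$ there exists $h_a\in B$ with $f(a)-g(a)=h_a^p-h_a$. Applying this to the powers $a^n$ shows $f(a)^n-g(a)^n\in\wp(B)$ for all $n\geq 1$. For any point $\bar y\in\overline Y\setminus Y$ of the smooth projective completion, the completion at $\bar y$ embeds $B\hookrightarrow K(\!(t)\!)$, and the hypotheses of \pref{prop:curve-germ-case} are then satisfied by the pair $(f(a),g(a))$ whenever $v_{\bar y}(f(a))<0$ and $g(a)\notin K$.

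A crucial auxiliary fact is the identity $B\cap K(\!(t)\!)^p=B^p$. Since $Y$ is smooth over the perfect field $K$, the DVR $R=\dO_{\overline Y,\bar y}$ decomposes as $R=\bigoplus_{i=0}^{p-1}R^p t^i$ over $R^p$, as does its completion $K[\![t]\!]$; this gives $R\cap K[\![t]\!]^p=R^p$, and passing to fraction fields and using normality of $Y$ yields the claimed identity. Consequently, whenever $a\in A$ satisfies $f(a),g(a)\notin B^p$ (so in particular both are non-constant, and some boundary point is a pole of $f(a)$), the conclusion of \pref{prop:curve-germ-case} specializes to $f(a)=g(a)$: the proof there shows that the ``reduced'' forms obtained by extracting $p$-th roots in $K(\!(t)\!)$ until the derivative is nonzero coincide, and $f(a)\notin K(\!(t)\!)^p$ means $f(a)$ is already its own reduction.

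Next I exhibit a \emph{probe} $a_0\in A$ with $f(a_0),g(a_0)\notin B^p$: picking $a_1,a_2\in A$ with $f(a_1),g(a_2)\notin B^p$, at least one of $a_1$, $a_2$, or $a_1+a_2$ works. Set $F:=f(a_0)$; since $F\notin K$, it has a pole at some $\bar y\in\overline Y\setminus Y$, which we fix, and the previous paragraph yields $f(a_0)=g(a_0)=F$. For arbitrary $a'\in A$ with $u:=f(a')$ and $v:=g(a')$, consider $\tilde a:=a'+Q(a_0)\,a_0\in A$ for some $Q\in\bF_p[T]$ to be chosen. Using $f(a_0)=g(a_0)=F$ and setting $\mu:=FQ(F)$, we get $f(\tilde a)=u+\mu$ and $g(\tilde a)=v+\mu$; if we arrange $u+\mu,v+\mu\notin B^p$, the preceding paragraph applied to $\tilde a$ forces $u+\mu=v+\mu$, hence $u=v$, finishing the proof.

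The main obstacle is the construction of such a $Q$. This reduces to showing that the $\bF_p$-linear image of $F\cdot\bF_p[F]$ inside $B/B^p$ is infinite, so that one can avoid the two forbidden classes $-\bar u$ and $-\bar v$. Suppose $\sum_{n\geq 1}c_nF^n\in B^p$ with $c_n\in\bF_p$. Applying the differential $d$ and using that $F\notin K(Y)^p$ (so that $dF$ is a $K(Y)$-basis of the one-dimensional $\Omega_{K(Y)/K}$), we deduce $\sum_{n\geq 1}nc_nF^{n-1}=0$ in $K(Y)$. Since $F\in B\setminus K$ and $K\supseteq\overline{\bF_p}$, the element $F$ is transcendental over $\bF_p$, so this polynomial identity forces $nc_n=0$, hence $c_n=0$ whenever $p\nmid n$. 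Thus the classes $\overline{F^n}$ with $p\nmid n$ form an infinite $\bF_p$-linearly independent family in $B/B^p$, providing infinitely many admissible $\mu$ and, in particular, a $Q$ avoiding the two forbidden classes.
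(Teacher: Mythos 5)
Your proof is correct, and its engine is the paper's: use the Artin--Schreier identification $\rH^1 = A/\wp(A)$ on the torsors pulled back from $t^n = x^p - x$ to get $f(a)^n - g(a)^n \in \wp(B)$ for all $n$, complete at a boundary pole of $f(a)$, and invoke \pref{prop:curve-germ-case}. Where you genuinely diverge is the globalization from ``good'' elements to all of $A$. The paper composes $f,g$ with each $t\in S = A\setminus\bigl(f^{-1}(B^p)\cup g^{-1}(B^p)\bigr)$, reducing to $X=\bA^1_{\bF_p}$, and then simply asserts that $S$ generates $A$ as an $\bF_p$-algebra (the complement of two proper subspaces). You instead fix a probe $a_0$ with $f(a_0)=g(a_0)=F$ and translate an arbitrary $a'$ into the good locus by $a'\mapsto a'+Q(a_0)a_0$; this requires your lemma that the classes $\overline{F^n}$, $p\nmid n$, are $\bF_p$-independent in $B/B^p$, whose proof (apply $d$, use $dF\neq 0$ and transcendence of $F$ over $\bF_p$) is correct. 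Your route costs an extra computation but buys a completely explicit, airtight substitute for the paper's generation claim, which is arguably the fussiest point of the paper's argument. One simplification: the auxiliary identity $B\cap K(\!(t)\!)^p=B^p$, while true by the argument you sketch, is not needed, and you need not peek into the internal reduction of \pref{prop:curve-germ-case}. Once its stated conclusion gives $f(a)^{p^{a'}}=g(a)^{p^{b'}}$ in $K(\!(t)\!)$, this is an identity in the domain $B$, and one cancels Frobenius powers directly in $B$: if exactly one exponent is positive, one of $f(a),g(a)$ lands in $B^p$, contrary to assumption; if both are positive, take $p$-th roots in $B$ (unique in characteristic $p$) and descend.
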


\begin{proof}
For each $t \in A$ we may compose $f$ and $g$ with the map $t\colon X \to \bA^1_{\bF_p}$. The assumptions of the proposition are preserved by $tf$ and $tg$ if 
\[
t \in S := A \setminus \Big(f^{-1}(B^p) \cup g^{-1}(B^p)\Big).
\]
Note that if for example $tf$ is constant, then  $\bF_p[t] \to B$, $t \mapsto f(t)$ factors over a subfield of $B$, hence over the maximal such $K \subseteq B$. In that case $t$ would be a $p$-th power in $B$. So asking  $t \in S$  also guarantees that $tf$ and $tg$ are both not constant.

If the proposition holds for all such compositions $tf,tg: Y \to \bA^1_{\bF_p}$, then $f(t) = g(t)$ for all $t \in S$.
Since $S$ is the complement in $A$ by two proper $\bF_p$-subvector spaces, the set $S$ generates $A$ as an $\bF_p$-algebra. This reduces the proof of the proposition to the case 
\[
X = \bA^1_{\bF_p} = \Spec\big(\bF_p[t]\big).
\]

By assumption all $\bF_p$-torsors $C \to \bA^1_{\bF_p}$ pull back with $f^*$ and $g^*$ to isomorphic torsors. We apply this to the Artin-Schreier torsor and also its pullback via $t \mapsto t^n$, i.e, the torsor $t^n = x - x^p$. That isomorphism of torsors is expressed by the existence of $h_n \in B$ for each $n \geq 1$ such that 
\begin{equation} \label{eq:torsors are equal}
  f^n - g^n = h_n - (h_n)^p.
\end{equation}

We now factor $f,g \colon Y \to \bA^1_{\bF_p}$ as $K$-linear maps $\ph, \gamma \colon Y \to \bA^1_K$ followed by the projection $\bA^1_K \to \bA^1_{\bF_p}$. Still $d \ph = \ph^* dt$ and $d \gamma = \gamma^* dt$ are non-zero and \eqref{eq:torsors are equal} holds for the elements $f= \ph$ and $g=\gamma$ in $B$. We may now compactifiy $Y$ to a smooth projective curve $\overline Y$ and extend the maps to $\overline{\ph}, \overline{\gamma}  : \overline Y \to \bP^1_K$. Since $\overline{\ph}$ is not constant, we may look at the local field $K(\!(T)\!)$ at a closed point $y \in Y$ where $\overline{\ph}$ has a pole. Here \pref{prop:curve-germ-case} applies and yields $a,b \geq 0$ with $f^{p^a} = g^{p^b}$. Since $df$ and $dg$ are both non-zero, we may cancel powers of $p$ on both sides until none remain and so $f = g$. 
\end{proof}

\bibliographystyle{alphaSGA} 

\begin{bibdiv}
\begin{biblist}

\bib{Achinger}{article}{
      author={Achinger, Piotr},
       title={Wild ramification and {$K(\pi, 1)$} spaces},
        date={2017},
     journal={Invent. Math.},
      volume={210},
      number={2},
       pages={453\ndash 499},
}

\bib{EGAIV2}{article}{
       title={\'{E}l\'{e}ments de g\'{e}om\'{e}trie alg\'{e}brique {IV}.
  \'{E}tude locale des sch\'{e}mas et des morphismes de sch\'{e}mas.
  {D}euxi{\`e}me partie},
        date={1965},
     journal={Publ. Math. IH\'{E}S},
      volume={24},
       pages={231pp},
      label = {EGAIV$_2$},
}

\bib{mumford:AV}{book}{
      author={Mumford, David},
       title={Abelian varieties},
      series={Tata Institute of Fundamental Research Studies in Mathematics},
        date={2008},
      volume={5},
        note={With appendices by C. P. Ramanujam and Yuri Manin, corrected
  reprint of the second (1974) edition},
}

\bib{Stix}{article}{
      author={Stix, Jakob},
       title={Affine anabelian curves in positive characteristic},
        date={2002},
     journal={Compositio Math.},
      volume={134},
      number={1},
       pages={75\ndash 85},
}

\bib{Tamagawa1997:GrothendieckConjectureAffine}{article}{
      author={Tamagawa, Akio},
       title={The {{Grothendieck}} conjecture for affine curves},
        date={1997},
     journal={Compositio Math.},
      volume={109},
      number={2},
       pages={135\ndash 194},
}

\bib{Tamagawa2002:Pi1ofCurevsPositiveCharacteristic}{incollection}{
    AUTHOR = {Tamagawa, Akio},
    TITLE = {Fundamental groups and geometry of curves in positive characteristic},
    BOOKTITLE = {Arithmetic fundamental groups and noncommutative algebra
              ({B}erkeley, {CA}, 1999)},
    SERIES = {Proc. Sympos. Pure Math.},
    VOLUME = {70},
    PAGES = {297--333},
    PUBLISHER = {Amer. Math. Soc., Providence, RI},
    YEAR = {2002},
}

\end{biblist}
\end{bibdiv}


\end{document}